\newtheorem{theorem}{Theorem}[section] 
\newtheorem{lemma}[theorem]{Lemma}
\newtheorem{corollary}[theorem]{Corollary}
\newtheorem{definition}{Definition}[section]
\newtheorem{proposition}[theorem]{Proposition}
\newtheorem{remark}[theorem]{Remark}
\newcommand{\ee}{{\textbf{e}}}
\newcommand{\dist}{{\rm dist}}
\newcommand{\cK}{{\mathcal K}}
\newcommand{\bR}{{\mathbb R}}
\newcommand\norm[1]{\Arrowvert {#1}\Arrowvert}
\title[]{Global minimizers of the two-phase Bernoulli problem with the $p$-Laplace operator}
\author[M. Bayrami]{Masoud Bayrami}
\address{Department of Mathematical Sciences, Sharif University of Technology, Tehran, Iran}
\email{masoud.bayrami1990@sharif.edu}
\author[M. Fotouhi]{Morteza Fotouhi}
\email{fotouhi@sharif.edu}
\date{\today}
\begin{document}

\begin{abstract}
In this paper, we study the classification of Lipschitz global solutions for a two-phase $p$-Laplace Bernoulli problem. Specifically, we focus on the scenario where the \textit{interior} two-phase points of the global solution are non-empty.  Our results show that the expected $C^{1,\eta}$ regularity holds in a suitable neighborhood of certain two-phase points, which we refer to to as  \textit{regular} two-phase points. 
\end{abstract}

\keywords{Two-phase Bernoulli problem, singular/degenerate operator, viscosity solutions, Lipschitz global minimizers.}

\subjclass[2020]{35R35, 35B65, 35J60, 35J70.}

\thanks{M. Bayrami and M. Fotouhi was supported by Iran National Science Foundation (INSF) under project No. 4031333.}

\maketitle

\section{Introduction and main result}
We study the regularity of the free boundary arising from the minimization of the following two-phase functional
\begin{equation}
\label{00J_TP}
J_{\mathrm{TP}}(v,D) :=\int_{D} |\nabla v|^{p} + (p-1) \lambda_+^{p} \chi_{\{v>0\}} + (p-1) \lambda_-^{p} \chi_{\{v<0\}} \, dx,
\qquad v \in \cK.
\end{equation}
Here, $D$ is a bounded and smooth domain in $\mathbb{R}^n$, $\chi_A$ denotes the characteristic function of a set $A$, and the positive constants $\lambda_+, \lambda_->0$ are given, with $1<p<\infty$. 
The admissible class $\cK$ consists of all functions $v \in W^{1,p}(D)$ satisfying the boundary data $v=g$ on $\partial D$, for a prescribed boundary datum $g \in W^{1,p}(D)$.

Before proceeding, we recall some standard terminology and definitions:
\begin{itemize}
\item
A function $u : D \to \mathbb{R}$ is said to be a \textit{minimizer} of $J_{\mathrm{TP}}$ in $D$ if 
$$ J_{\mathrm{TP}}(u,D)\leq J_{\mathrm{TP}}(v,D), $$
for all $v \in \cK$. 

\item
The sets $\Omega^+_u = \{x \in D \, : \, u(x) > 0\}$ and $\Omega^-_u = \{x \in D \, : \, u(x) < 0\}$ are are referred to as the positivity and negativity sets of $u$, respectively. We also write $u^+ := \max\{u,0\}$ and $u^- := \max\{-u,0\}$.
\item
 The set $F(u):=\left(\partial \Omega^+_u \cup \partial \Omega^-_u \right) \cap D$ represents the free boundary of $u$.
 
\item
The set $\Gamma_{\mathrm{TP}}(u):= \partial \Omega^+_u \cap \partial \Omega^-_u \cap D$ is the set of two-phase points of the free boundary $F(u)$. For simplicity, we denote it by $\Gamma_{\mathrm{TP}}$.
\item
The boundary of positive and negative phases can be decomposed as 
$$ \partial \Omega^{\pm}_u \cap D = \Gamma^{\pm}_{\mathrm{OP}} \cup \Gamma_{\mathrm{TP}},$$
where $\Gamma^{+}_{\mathrm{OP}}:=\left(\partial \Omega^{+}_u \setminus \partial \Omega^{-}_u \right) \cap D$ and $\Gamma^{-}_{\mathrm{OP}}:=\left(\partial \Omega^{-}_u \setminus \partial \Omega^{+}_u \right) \cap D$ are the one-phase parts of the free boundary. 
\item
We say that $x_0 \in \Gamma_{\mathrm{TP}}$ is an \textit{interior} two-phase point, and denote it by $x_0 \in \Gamma^{\mathrm{int}}_{\mathrm{TP}}$, if
$$ | B_r(x_0) \cap \{u=0\} | = 0, \qquad \text{for some} \quad r>0. $$
\item
We say that $x_0 \in \Gamma_{\mathrm{TP}}$ is a \textit{branch} point, denoted by $x_0 \in \Gamma^{\mathrm{br}}_{\mathrm{TP}}$, if
$$ | B_r(x_0) \cap \{u=0\} | > 0, \qquad \text{for every} \quad r>0. $$
\end{itemize}

Any minimizer $u$ satisfies, in a certain weak sense, the following  problem
\begin{equation}
\label{OVERDETERMINED}
\begin{cases}
\Delta_{p} u := \mathrm{div}( |\nabla u|^{p-2} \nabla u ) = 0,  \quad & \text{in} \quad  \,\, \Omega_u^+ \cup \Omega_u^-, \\
|\nabla u^+|^{p}-|\nabla u^-|^{p}= \lambda_+^{p}-\lambda_-^{p}, \quad |\nabla u^+| \geq \lambda_{+}, \quad |\nabla u^-| \geq \lambda_{-}, \quad & \text{on} \quad \Gamma_{\mathrm{TP}}, \\
|\nabla u^+| = \lambda_{+},  \quad & \text{on} \quad \Gamma^{+}_{\mathrm{OP}}, \\
|\nabla u^-| = \lambda_{-}, \quad & \text{on} \quad \Gamma^{-}_{\mathrm{OP}},
\end{cases}
\end{equation}
where $\Delta_{p} u = \mathrm{div}( |\nabla u|^{p-2} \nabla u )$ is the $p$-Laplace operator; see \cite[Lemma 3.1]{fotouhi-bayrami2023}.
These types of problems are known as Bernoulli-type free boundary problems and arise in various models in fluid mechanics and heat conduction; see, for example,  \cite{MR682265, MR733897, MR740956, MR772122, karakhanyan2021regularity}.

For admissible functions in $\cK^+:=\{v \in \cK \, : \, v \geq 0\}$, the analogous one-phase functional and the associated  overdetermined problem, known as the one-phase Bernoulli problem, were first studied in \cite{MR618549} in the case $p=2$. 
Also, the case of uniformly elliptic quasilinear equations was later treated in \cite{MR752578}. 
Earlier results on the one-phase Bernoulli problem for the $p$-Laplace operator can be found in \cite{bayrami2024lipschitz, MR2133664,  MR2250499, fotouhi2024minimization, karakhanyan2021full, MR2399040}. 
We also refer to \cite{ferrari2022regularity} for the $p(x)$-Laplacian Bernoulli problem with a nontrivial right-hand side.
It is worth noting that, in general, for the one-phase Bernoulli problem, the $(n-1)$-Hausdorff dimension of the singular set of the free boundary is zero and that the free boundary is analytic in a suitable neighborhood of the regular points, \cite{MR2133664}.

The two-phase problem in the case $p=2$ was first studied in \cite{MR732100}, followed by a series of works by Luis Caffarelli, \cite{MR990856, MR1029856,  MR973745}; see also \cite{MR2145284}.
A comprehensive study and the review of earlier results in the case $p=2$ can be found in \cite{MR4285137, velichkov2019regularity}.

The main difficulty in dealing with \eqref{OVERDETERMINED} is that the governing operator $\Delta_p u$ is not uniformly elliptic. 
However, the regularity of the free boundary implies non-degeneracy of $|\nabla u|$ near the free boundary, which in turn restores uniform ellipticity for the $p$-Laplacian in a neighborhood of regular points. 
Without such regularity, it is difficult to establish non-degeneracy up to the free boundary.

For the minimizers of the following functional, instead of \eqref{00J_TP}
$$ J(u,D):=\int_{D} |\nabla u|^p + \lambda_+^p \chi_{\{u>0\}} + \lambda_-^p \chi_{\{u \leq 0 \}} \, dx=\int_{D} |\nabla u|^p + \left(\lambda_+^p-\lambda_-^p \right) \chi_{\{u>0\}} \, dx + \lambda_-^p \left| D \right|, $$
or, equivalently, after neglecting the constant term $\lambda_-^p \left| D \right|$, and setting $\Lambda:=\lambda_+^p-\lambda_-^p$, one obtains the functional
\begin{equation}
\label{AJ_TP}
u \mapsto \int_{D} |\nabla u|^p + \Lambda \chi_{\{u>0\}} \, dx,
\end{equation}
the best known results come from the stratification argument in \cite{MR3771123}. 
The important fact about the minimizers of the functional \eqref{AJ_TP} is that their free boundary does not contain any branch points. 
In \cite{MR2680176, MR2876248}, it is shown that Lipschitz free boundaries of minimizers of \eqref{AJ_TP} are $C^{1,\eta}$ for some $\eta \in (0,1)$. Moreover, \cite{MR3771123} establishes that the $(n-1)$-dimensional Hausdorff measure of the singular set of the free boundary is zero.

For minimizers of the functional \eqref{00J_TP}, where the presence of branch points is expected, a complete regularity theory is available in the case $p=2$; see \cite{MR4285137}. In that work, it is shown that in a suitable neighborhood of two-phase points (including both interior and branch points) $\partial\Omega_u^\pm$ are $C^{1,\eta}$ regular.

The generalization of this result, for any $p \in (1, \infty)$, is highly nontrivial due to the nonlinear and degenerate nature of the $p$-Laplace operator, as well as the lack of the monotonicity formulas when $p \neq 2$.

In \cite{fotouhi-bayrami2023}, we extend the main result of \cite{MR3771123} to functional \eqref{00J_TP}, which also covers the analysis of the behavior of the minimizers near the branch points. 
More precisely,  we prove the local Lipschitz regularity of the minimizers using a dichotomy argument. 
We first establish $C^{1,\eta}$ regularity of the flat part of the free boundary, and then show that  either the free boundary is flat or the solution exhibits linear growth at non-flat points.
In order to obtain $C^{1,\eta}$ regularity from flatness, we employ the linearization technique of De Silva \cite{MR2813524}. 
It is important to emphasize that in \cite{fotouhi-bayrami2023}, a linear improvement of flatness is used to obtain compactness of the linearizing sequences. In the present work, however, a stronger quadratic improvement of flatness is required.

The quadratic improvement of flatness deserves further attention, particularly in the study of free boundary problems where classical monotonicity formulas are unavailable. 
In a recent work, Savin and Yu \cite{savin2023regularity} established a stratification theorem for the singular set in the fully nonlinear obstacle problem. 
Their argument relies centrally on a self-improving mechanism: if a solution is sufficiently close to a paraboloid at a given scale, it becomes progressively closer at smaller scales.
Although the setting and objectives are different, this iterative improvement principle plays a similar role in the present work and underlies the quadratic improvement of flatness scheme developed here.

\subsection{Main results}

Our goal in this paper is to continue our previous research,  \cite{fotouhi-bayrami2023}, by classifying the Lipschitz global minimizers of \eqref{00J_TP} (which they are also Lipschitz solutions of \eqref{OVERDETERMINED} in $D=\mathbb{R}^n$). Our first main result is presented in the following classification theorem.

\begin{theorem}[Classification of Lipschitz global two-phase solutions]
\label{Ext02}
Let $ u $ be a Lipschitz global viscosity solution of \eqref{OVERDETERMINED}, i.e. with $D=\mathbb{R}^n$. 
Assume that 
$\Gamma_{\mathrm{TP}}^{\mathrm{int}}(u) \neq \emptyset$ (without loss of generality, assume that $0 \in \Gamma^{\mathrm{int}}_{\mathrm{TP}}(u)$), then $u$ is a two-plane solution of the form
\begin{equation*}
u(x)=\alpha \left( x \cdot \nu \right)^+-\beta \left( x \cdot \nu \right)^-,
\end{equation*}
for some $\nu \in \mathbb{S}^{n-1}$, with
\begin{equation*}
\alpha \geq \lambda_+, \qquad \beta \geq \lambda_-, \qquad \alpha^p-\beta^p=\lambda_+^p-\lambda_-^p.
\end{equation*}
\end{theorem}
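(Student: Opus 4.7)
The plan is to blow $u$ up at the interior two-phase point $0$, identify the blow-up as a two-plane solution, and then extend this identification to all of $\mathbb{R}^n$. Concretely, I would set $u_r(x) := u(rx)/r$. The global Lipschitz bound makes $\{u_r\}_{r>0}$ equi-Lipschitz and bounded on compact sets, so along a subsequence $r_k \to 0^+$ one extracts a locally uniform limit $u_0$, which is itself a global Lipschitz viscosity solution of \eqref{OVERDETERMINED} by stability of viscosity solutions. The hypothesis $0 \in \Gamma^{\mathrm{int}}_{\mathrm{TP}}(u)$ rescales well: $|B_R \cap \{u_0 = 0\}| = 0$ for every $R>0$, so the free boundary of $u_0$ contains no branch points and $\Omega_{u_0}^+ \cup \Omega_{u_0}^-$ exhausts $\mathbb{R}^n$ up to a null set.

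The second step is to show that $u_0$ is a two-plane $P(x) = \alpha (x \cdot \nu)^+ - \beta (x \cdot \nu)^-$. Since $u_0$ has no branch points, its behavior is governed by the reduced sign-change problem associated with the functional \eqref{AJ_TP} with $\Lambda := \lambda_+^p - \lambda_-^p$. Using the regularity and structural results for this reduced functional (\cite{MR3771123, MR2680176, MR2876248}), together with the dichotomy (flatness versus linear growth at non-flat points) and the flat-free-boundary regularity of \cite{fotouhi-bayrami2023}, one forces $u_0$ to be planar. The overdetermined conditions in \eqref{OVERDETERMINED} then immediately enforce $\alpha \geq \lambda_+$, $\beta \geq \lambda_-$, and $\alpha^p - \beta^p = \lambda_+^p - \lambda_-^p$.

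Finally, with $u_0 = P$, the free boundary of $u$ at scale $r_k$ is $\varepsilon_k$-flat with $\varepsilon_k \to 0$, and $u$ is $\varepsilon_k r_k$-close in sup norm to $P$ on $B_{r_k}$. I would then invoke the quadratic improvement of flatness promised in the introduction (stronger than the linear version of \cite{fotouhi-bayrami2023}), iterating it upward in scale in the spirit of a Liouville argument, to propagate the flatness and force $u \equiv P$ in a neighborhood of $0$. Unique continuation for $p$-harmonic functions in each phase, combined with the overdetermined conditions on the resulting planar free boundary and the fixed sign structure of $P$, then extends this identity to all of $\mathbb{R}^n$.

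The main obstacle I anticipate is the planarity of the blow-up in the second step: for $p \neq 2$ the Alt--Caffarelli--Friedman monotonicity formula, which is the natural route to $1$-homogeneity of $u_0$ when $p=2$, is unavailable. A substitute rigidity argument is required, either through a Weiss-type monotonicity formula tailored to the $p$-Laplacian or by exploiting the no-branch-point structure inherited from the interior two-phase hypothesis. The quadratic improvement of flatness highlighted in the introduction is, I expect, the critical technical replacement that makes the whole strategy close up.
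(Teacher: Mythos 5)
Your plan has two genuine gaps, and both concern precisely the places where you flag difficulty.

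First, the planarity of the blow-up. You propose to establish that the blow-up $u_0$ at the origin is a two-plane by invoking the no-branch-point structure and the reduced functional \eqref{AJ_TP}, citing \cite{MR3771123, MR2680176, MR2876248}. But in Theorem \ref{Ext02} the hypothesis is only that $u$ is a Lipschitz global \emph{viscosity} solution of \eqref{OVERDETERMINED} --- there is no minimizer structure available for $u_0$, so those minimizer-based results do not apply. Moreover, even granting minimality, passing from ``no branch points'' to planarity of a general Lipschitz global solution is not something those references supply, and (as you note) the ACF monotonicity formula is unavailable for $p \ne 2$. The paper does not blow up at $0$ at all for this step. In Lemma \ref{Ext17} it instead rescales around points $x_\epsilon \in \Omega_u^+$ where $|\nabla u(x_\epsilon)| \ge \alpha - \epsilon$, with scale $d_\epsilon = \operatorname{dist}(x_\epsilon, \partial\Omega_u^+)$. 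The limit $\overline u$ is $p$-harmonic and positive in $B_1$ with $\partial_n \overline u(0) = \alpha = \sup|\nabla\overline u|$; the strong maximum principle applied to $\partial_n \overline u$ (a solution of a uniformly elliptic divergence-form equation where $\nabla \overline u \ne 0$) forces $\partial_n \overline u \equiv \alpha$, hence $\overline u$ is linear, and then the viscosity free-boundary conditions identify the negative phase. This gradient-supremum rigidity, valid for viscosity solutions, is the missing ingredient your plan needs and does not supply.

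Second, the propagation to all scales. Iterating the quadratic improvement of flatness ``upward in scale'' is not directly meaningful: Proposition \ref{Ext09} transfers information from $B_1$ to $B_{r_1}$, i.e.\ downward. What the paper actually proves is the pointwise bound $\partial_n u(t\ee_n) \ge 1 - \overline\eta$ for \emph{all} $t>0$, by a contradiction argument: if it fails first at some $\overline t$ (normalized to $1$), then a Harnack inequality for $1 - \partial_n u$ plus a barrier yields the quadratic decay $u(t\ee_n) \le t - c_1 t^2$ on $[0,\tfrac14]$; meanwhile, iterating the nonlinear dichotomy (Proposition \ref{Ext07}) --- with alternative $(i)$ excluded because $\partial_n u^+(0)$ is nearly $1$ --- gives $|u - P_{M,\nu}| \le \epsilon_0 r^{2+\alpha_0}$ in small balls, with $\nu$ close to $\ee_n$, which forces $u(\tfrac r2 \ee_n) \ge \tfrac r2 - C' r^{2+\alpha_0}$ and contradicts the quadratic decay. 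Once the monotonicity holds at all scales, Lemma \ref{Ext20} gives $|u-x_n| \le \epsilon_0 R$ in $B_R$ for every large $R$, and the (linear) improvement of flatness of \cite{fotouhi-bayrami2023} then drives the $L^\infty(B_R)$-distance of $u$ to $\mathcal{U}_{\lambda_+,\lambda_-}$ to zero. This is a far more delicate mechanism than iterating Proposition \ref{Ext09}, and your ``unique continuation'' closing step is not needed once this argument is carried out. You have correctly identified where the difficulty sits, but the paper's two technical devices --- the gradient-supremum rigidity of Lemma \ref{Ext17} and the reversed-flatness contradiction via Proposition \ref{Ext07} --- are both absent from your sketch and are not substitutable by the tools you cite.
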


The proof of Theorem \ref{Ext02} closely follows the approach of De Silva and Savin \cite{MR3998636}.  
As a result of this classification of Lipschitz global solutions to \eqref{OVERDETERMINED},  we obtain the regularity of the free boundary $F(u)=\left(\partial \Omega^+_u \cup \partial \Omega^-_u \right) \cap D$, for minimizers of $J_{\mathrm{TP}}$, around the  \textit{regular} two-phase points.

\begin{definition}\label{d}
We say that $x_0 \in \Gamma_{\mathrm{TP}}$ is a \textit{regular two-phase point}  if 
\begin{equation}\label{regular-point}
\liminf_{r\to0}\frac{| B_r(x_0) \cap \{u=0\} | }{|B_r(x_0)|} =0.
\end{equation}
\end{definition}

More precisely, we prove that, in a suitable neighborhood of the regular two-phase points, the sets $\Omega^+_u$ and $\Omega^-_u$ are two $C^{1,\eta}$ regular domains that meet along the closed set of the two-phase points $\Gamma_{\mathrm{TP}}$.

\begin{theorem}[Regularity of the regular part of the free boundary]
\label{T1}
Let $u:D \to \mathbb{R}$ be a minimizer of $J_{\mathrm{TP}}$ in $D$. Then, for every regular two-phase point $x_0$, there exists a radius $r_0>0$ (depending on $x_0$) such that $\partial \Omega^{\pm}_u \cap B_{r_0}(x_0)$ are $C^{1,\eta}$ graphs for any $\eta \in (0,\frac{1}{3})$. 
\end{theorem}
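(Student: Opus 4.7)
The plan is to reduce Theorem \ref{T1} to Theorem \ref{Ext02} via a blow-up argument, and then close the loop with a flatness-implies-$C^{1,\eta}$ statement. Fix a regular two-phase point $x_0$, which we may assume to be the origin. By definition there is a sequence $r_k\downarrow 0$ with
\[
\frac{|B_{r_k}\cap\{u=0\}|}{|B_{r_k}|}\longrightarrow 0.
\]
Consider the rescalings $u_k(x):=u(r_k x)/r_k$. By the local Lipschitz bound established in \cite{fotouhi-bayrami2023}, the family $\{u_k\}$ is uniformly Lipschitz on every compact set, so up to subsequence $u_k\to u_0$ locally uniformly in $\mathbb{R}^n$. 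Standard stability of viscosity solutions for the $p$-Laplace free boundary problem \eqref{OVERDETERMINED} (together with the non-degeneracy of $u^\pm$ near two-phase points proved in \cite{fotouhi-bayrami2023}) then shows that $u_0$ is a global Lipschitz viscosity solution of \eqref{OVERDETERMINED} on $\mathbb{R}^n$, and that Hausdorff convergence of the free boundaries $F(u_k)\to F(u_0)$ holds locally.

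Next I would verify that $u_0$ satisfies the hypothesis of Theorem \ref{Ext02}. Non-degeneracy of both phases at $0$ for $u$ ensures that $0\in\Gamma_{\mathrm{TP}}(u_0)$. Moreover, a change of variables gives
\[
|B_1\cap\{u_k=0\}|=r_k^{-n}|B_{r_k}\cap\{u=0\}|\longrightarrow 0,
\]
and by the locally uniform convergence $u_k\to u_0$ and the non-degeneracy of $u_0^\pm$ near $0$, the set $\{u_0=0\}$ has empty interior in a neighbourhood of $0$ (a stronger fact: it has zero Lebesgue density at $0$). Consequently $0\in\Gamma_{\mathrm{TP}}^{\mathrm{int}}(u_0)$, and Theorem \ref{Ext02} applies to give
\[
u_0(x)=\alpha(x\cdot\nu)^+-\beta(x\cdot\nu)^-
\]
for some admissible $(\alpha,\beta,\nu)$. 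In particular, for every $\varepsilon>0$ there exists $k=k(\varepsilon)$ such that, after a suitable rescaling, $u$ is $\varepsilon$-close in $B_1$ to a two-plane solution; equivalently, $u$ is \emph{flat} at scale $r_k$ near $x_0$ in the sense used in \cite{fotouhi-bayrami2023}.

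The final step is to feed this flatness into an improvement-of-flatness scheme to upgrade to $C^{1,\eta}$ regularity of $\partial\Omega_u^{\pm}$. The linear improvement of flatness used in \cite{fotouhi-bayrami2023} for one-phase points and branch configurations is not quite enough here; as emphasised in the introduction, one needs a \emph{quadratic} improvement of flatness to handle genuine two-phase configurations, which will be carried out by the linearisation technique of De Silva \cite{MR2813524}. Iterating this quadratic improvement gives, at $x_0$, the existence of a normal $\nu_0$ and a rate
\[
\mathrm{dist}(x,\{y:(y-x_0)\cdot\nu_0=0\})\le C\,|x-x_0|^{1+\eta}\qquad\text{on }F(u)\cap B_{r_0}(x_0),
\]
for any $\eta\in(0,1/3)$, which translates into $\partial\Omega_u^{\pm}\cap B_{r_0}(x_0)$ being $C^{1,\eta}$ graphs.

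I expect the main obstacle to lie in two intertwined places. First, ensuring that along the blow-up sequence the \emph{negative} phase does not collapse to $\{0\}$; this is where non-degeneracy of $u^-$ across the two-phase set (and not merely along its own side) is crucial, and it is what distinguishes a regular point from a generic branch point. Second, in the quadratic improvement of flatness step the competitor construction and the Harnack-type inequality have to be calibrated carefully: the $p$-Laplace operator is degenerate exactly where gradients vanish, but flatness combined with the classification forces $|\nabla u|$ to be bounded below on both sides of $F(u)$, restoring uniform ellipticity and allowing De Silva's partial Harnack and linearisation arguments to run as in the $p=2$ model of \cite{MR4285137}.
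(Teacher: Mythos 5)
Your proposal follows essentially the same path as the paper: blow up at the regular two-phase point, use Lipschitz regularity (Theorem \ref{P1}) and non-degeneracy (Theorem \ref{P1.5}) to ensure the blow-up limit is a Lipschitz global solution with $0\in\Gamma_{\mathrm{TP}}^{\mathrm{int}}$, classify it as a two-plane solution via Theorem \ref{Ext02}, and then invoke flatness-implies-$C^{1,\eta}$. The details you add about the density of $\{u=0\}$ at $x_0$ and the convergence of zero sets are correct fleshings-out of what the paper states in a line.

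One point of fact is worth correcting. You write that the final ``flatness implies $C^{1,\eta}$'' step needs the \emph{quadratic} improvement of flatness and cannot rely on the \emph{linear} improvement from \cite{fotouhi-bayrami2023}. That is not how the paper closes the argument: once the blow-up limit is classified as a two-plane solution, it simply cites the $\varepsilon$-regularity theorem from \cite{fotouhi-bayrami2023} (which already handles two-phase flat configurations via the linear improvement) to upgrade flatness to $C^{1,\eta}$ graphs. The quadratic improvement of flatness (Proposition \ref{Ext09}) and the nonlinear dichotomy (Proposition \ref{Ext07}) are the new ingredients needed \emph{upstream}, in the proof of the Liouville theorem, Theorem \ref{Ext02} — which you correctly invoke as a black box. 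So you do not need to re-derive a quadratic improvement-of-flatness scheme for the last step; the already-available linear one suffices once the blow-up limit is known to be a two-plane.
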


By definition, every interior two-phase point satisfies \eqref{regular-point} and is therefore a regular two-phase point. Consequently, Theorem \ref{T1} applies in a neighborhood of every interior two-phase point.
The situation at branch points is more subtle. At present, it is not known whether every branch point satisfies \eqref{regular-point}. 
Hence, branch points provide the only currently known obstruction to extending Theorem \ref{T1} to the entire two-phase free boundary in the nonlinear setting $p\neq 2$. Understanding the existence and structure of possible singular branch points remains an important open problem.

When $p=2$, the situation is significantly clearer.  Thanks to the ACF monotonicity formula and subsequent developments of the theory, all two-phase points are regular, a complete classification of global solutions is available, and the analogue of Theorem \ref{T1} holds throughout the entire two-phase free boundary; see \cite{MR4285137}.

\medskip

\subsection{Notation} 
We collect here the notation and conventions used throughout the paper.

\begin{description}[    style=nextline, 
  leftmargin=2.5cm,
  font=\normalfont]
\item[$\overline{U}$] the closure of a set $U$
\item[$\partial U$] the boundary of a set $U$
\item[$x'$] the vector $x'=(x_1, \dots, x_{n-1}) \in \mathbb{R}^{n-1}$, where $x=(x', x_n) \in \mathbb{R}^n$
\item[$B_r(x),\, B_r$] the open ball centered at $x$ with radius $r>0$, and $B_r := B_r(0)$
\item[$U_{a^{+},\nu}(x)$] the two-plane solution $U_{a^{+},\nu}(x):=a^+(x \cdot \nu)^+-a^-(x \cdot \nu)^-$, where $\left(a^+\right)^p-\left(a^-\right)^p=\lambda_+^p-\lambda_-^p$ and  $a^+ \ge \lambda_+$
\item[$\mathcal{U}_{\lambda_+,\lambda_-}$] the family of all two-plane solutions to \eqref{OVERDETERMINED}
\item[$\nu,\, \ee_n$] a unit vector $\nu \in \mathbb{S}^{n-1}$ and $\ee_n=(0, \dots, 0, 1) \in \mathbb{R}^n$
\item[$M=(M_{ij})$] a symmetric matrix $M=(M_{ij}) \in \mathcal{S}^{n \times n}$
\item[$\mathrm{tr}(M)$] the trace of $M$
\item[$P_{M,\nu}(x)$] the quadratic polynomial $P_{M,\nu}(x) := x \cdot \nu + \frac{1}{2} x^T M x$
\item[$V_{a^{+},M,\nu}(x)$] the two-phase quadratic
polynomial $V_{a^{+},M,\nu}(x):=a^+P^+_{M,\nu}(x)-a^-P^-_{M,\nu}(x)$, where $\left(a^+\right)^p-\left(a^-\right)^p=\lambda_+^p-\lambda_-^p$ and $a^+ \ge \lambda_+$
\item[$\partial_n$] the partial derivative in the $\ee_n$-direction
\item[$\partial_{nn}$] the second partial derivative in the $\ee_n$-direction
\item[$\partial^{\pm}_n$] the partial derivative in the $\ee_n$-direction taken from the side \hbox{$\{\pm x_n>0\}$}
\item[$\mathcal{L}_p (u)$] the linearized operator $\mathcal{L}_p (u) := \Delta u +(p-2)\partial_{nn} u$
\end{description}

\medskip

We briefly recall here the notion of blow-up. 
Let \( u \) be a minimizer of \( J_{\mathrm{TP}} \) in an open set \( D \). For \( x_0 \in F(u) \) and \( 0 < r < \operatorname{dist}(x_0, \partial D) \), we define the rescaled function
\[
u_{x_0,r}(x) := \frac{u(x_0 + rx)}{r},
\]
which is well‑defined for \( |x| < \frac{1}{r} \operatorname{dist}(x_0, \partial D) \) and vanishes at the origin. When \( x_0=0 \), we simply write \( u_r := u_{0,r} \).

For every \( R>0 \) and \( r \ll 1 \), the functions \( u_{x_0,r} \) are uniformly Lipschitz on \( B_R \), thanks to the local Lipschitz regularity of \( u \) (see \cite[Theorem 1.2]{fotouhi-bayrami2023}). 
Given \( r_k \to 0 \), the corresponding  family \( \{u_{x_0,r_k}\} \) is called a blow‑up sequence at \( x_0 \). 
By compactness,  after passing to a subsequence, there exist a Lipschitz function \( v : \mathbb{R}^n \to \mathbb{R} \) such that \( u_{x_0,r_k} \to v \) uniformly on every ball \( B_R \). Any such limit \( v \) is called  a blow‑up limit at \( x_0 \).

Throughout the paper, viscosity solutions are understood  in the sense of \cite[Lemma 3.1]{fotouhi-bayrami2023}. We also recall that viscosity and weak solutions of \( \Delta_p u = f \) are equivalent; see \cite[Theorem 2.2 and Remark 2.3]{MR4273843}.

\medskip

\subsection{Outline of the paper}
The paper is organized as follows: 
Section \ref{quadratic-improvement} is devoted to a Harnack-type inequality (Proposition \ref{Ext35}) and quadratic improvement of flatness (Proposition \ref{Ext09}) for solutions of \eqref{OVERDETERMINED}.
In Section \ref{Nonlinear dichotomy}, we introduce a nonlinear dichotomy which is crucial for the proof of the main results.
 Section \ref{Main-theorems} contains  the proofs of the main theorems, namely Theorem \ref{Ext02} and Theorem \ref{T1}.
Finally, in the Appendix, we establish a Harnack inequality for an auxiliary problem of $p$-Laplacian type, and present a Liouville type result for global solutions of the linearized operator, $\mathcal{L}_p$.


\section{Quadratic improvement of flatness}\label{quadratic-improvement}
In this section, we establish a quadratic improvement of flatness result. 
Before proceeding, we introduce some notation and definitions that will be used throughout the section.

We begin by defining a two-plane solution of  \eqref{OVERDETERMINED}
$$ U(x)=U_{a^{+},\nu}(x):=a^+(x \cdot \nu)^+-a^-(x \cdot \nu)^-, $$
for some $\nu \in \mathbb{S}^{n-1}$ and the constants $a^+, a^-$ that satisfy
$$ \left(a^+\right)^p-\left(a^-\right)^p=\lambda_+^p-\lambda_-^p, \qquad a^+ \geq \lambda_+, \quad a^-\geq \lambda_-. $$
We denote by $\mathcal{U}_{\lambda_+,\lambda_-}$ the family of all two-plane solutions.

Next, we introduce quadratic polynomials
$$ P_{M,\nu}(x) := x \cdot \nu + \frac{1}{2} x^T M x, $$
where $\nu \in \mathbb{S}^{n-1}$ and  $M=(M_{ij}) \in \mathcal{S}^{n \times n}$ is a symmetric matrix. In addition, they  satisfy the conditions
\begin{equation}
\label{CCFFNO1}
M\nu=0, \qquad  \mathrm{tr}(M)=0.
\end{equation}

We also define the associated two-phase quadratic polynomials
\begin{equation}
\label{QPNOT}
V_{a^{+},M,\nu}(x):=a^+P^+_{M,\nu}(x)-a^-P^-_{M,\nu}(x), \qquad \left(a^+\right)^p-\left(a^-\right)^p=\lambda_+^p-\lambda_-^p.
\end{equation} 
For convenience, we occasionally write $V_{a^{+},a^{-},M,\nu}$ when it is desirable to emphasize both slopes.
Notice that $ V_{a^{+},0,\nu}=U_{a^{+},\nu}$. 

It should be noted that the quadratic polynomials $P^+_{M,\nu}$ are not $p$-harmonic when $p\ne 2$, even though their linear counterpart $U_{a^{+},\nu}$ solves  \eqref{OVERDETERMINED}. 
This poses a challenge when attempting to extend the linear improvement of flatness to a quadratic one.

\begin{proposition}[Quadratic improvement of flatness]
\label{Ext09}
For every $1<p<\infty$, $\lambda_\pm>0$ and $0<L_0\le L_1$, there exists positive constants  $r_1$, $\epsilon_1$, and $\delta$  such that if 
 $u$ is a solution to \eqref{OVERDETERMINED} with the assumption $0 \in \Gamma_{\mathrm{TP}}(u)$ and furthermore it satisfies
\begin{equation}
\label{Ext10}
|u-V_{a^{+},M,\nu}| \leq \epsilon, \qquad \text{in} \quad B_1, 
\end{equation}
for some $0<\epsilon\leq \epsilon_1$, where $L_0\leq a^+\leq L_1$, and
\begin{equation}
\label{Ext11}
\|M\| \leq \delta \epsilon^{\frac{1}{2}}.
\end{equation}
Then
\begin{equation}
\label{Ext12}
|u-V_{\overline{a}^{+},\overline{M},\overline{\nu}}| \leq \epsilon r_1^{1+\alpha_1}, \qquad \text{in} \quad B_{r_1},
\end{equation}
for some $\alpha_1>0$, where
$$ |\overline{a}^{+}-a^{+}|, \, |\overline{\nu}-\nu|, \|\overline{M} - M\| \leq C \epsilon. $$
Here, the constants $r_1, \epsilon_1, \delta, \alpha_1$, and $C$ depend only on $n, p, \lambda_\pm, L_0$, and $L_1$. 
\end{proposition}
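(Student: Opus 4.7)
The plan is to proceed by a contradiction--compactness--linearization scheme in the spirit of De~Silva and Savin~\cite{MR3998636}, carrying second-order information so as to upgrade the linear improvement of flatness of~\cite{fotouhi-bayrami2023} to a quadratic one. Suppose the statement fails; then one obtains sequences $\epsilon_k \downarrow 0$, solutions $u_k$ of \eqref{OVERDETERMINED} with $0\in \Gamma^{\mathrm{int}}_{\mathrm{TP}}(u_k)$, and polynomials $V_k := V_{a_k^+,M_k,\nu_k}$ satisfying \eqref{Ext10}--\eqref{Ext11}, but for which no $V_{\bar a^+,\bar M,\bar\nu}$ with $|\bar a^+-a_k^+|,|\bar\nu-\nu_k|,\|\bar M-M_k\|\le C\epsilon_k$ realizes \eqref{Ext12}. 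After extracting subsequences we may assume $\nu_k\to \ee_n$, $a_k^\pm \to a_\infty^\pm\in [L_0,L_1]$ (so that $(a_\infty^+)^p-(a_\infty^-)^p = \lambda_+^p-\lambda_-^p$), and $\epsilon_k^{-1/2} M_k \to M_\infty$ with $\|M_\infty\|\le \delta$.

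Next, I would introduce the renormalized errors
$$\tilde u_k(x) := \frac{u_k(x)-V_k(x)}{\epsilon_k}, \qquad x\in B_1,$$
and combine the Harnack-type inequality of Proposition~\ref{Ext35} with the non-degeneracy from Theorem~\ref{P1.5} to obtain uniform Hölder estimates for $\tilde u_k$ on each side of the free boundary. By Arzelà--Ascoli a subsequence converges locally uniformly to limits $\tilde u_\infty^\pm$ defined on $\overline{\{\pm x_n>0\}}\cap B_1$, while $\partial\Omega_{u_k}^\pm$ converges in Hausdorff distance to $\{x_n=0\}$. Expanding $\Delta_p u_k=0$ around the two-plane solution $U_{a_\infty^+,\ee_n}$ and keeping the leading order in $\epsilon_k$, while controlling the error produced by $\Delta_p V_k$ (see the last paragraph), shows that $\mathcal{L}_p \tilde u_\infty^\pm=0$ on $\{\pm x_n>0\}$. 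Linearizing the Bernoulli jump $|\nabla u_k^+|^p-|\nabla u_k^-|^p = \lambda_+^p-\lambda_-^p$ about $U_{a_\infty^+,\ee_n}$ produces the transmission condition $(a_\infty^+)^{p-1}\partial_n^+ \tilde u_\infty^+ = (a_\infty^-)^{p-1}\partial_n^- \tilde u_\infty^-$ on $\{x_n=0\}$, augmented by the usual correction that accounts for the infinitesimal motion of the free boundary.

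I would then invoke the classification and interior regularity theory for this linear transmission problem developed in Appendix~\ref{appendix2} to conclude that $\tilde u_\infty^\pm$ is of class $C^{2,\alpha}$ up to $\{x_n=0\}$. Since $\tilde u_\infty(0)=0$, a Taylor expansion at the origin produces correction parameters $(\tilde a^+,\tilde \nu,\tilde M)$, adjusted to respect the constraints $\tilde M \ee_n=0$, $\mathrm{tr}(\tilde M)=0$, and the linearized Bernoulli relation, such that the corresponding two-phase quadratic $\tilde V$ satisfies $\sup_{B_r}|\tilde u_\infty - \tilde V| \le C_0 r^{2+\alpha}$. Fixing any $\alpha_1\in(0,\alpha)$ and choosing $r_1$ small enough that $C_0 r_1^{2+\alpha}\le \tfrac12 r_1^{1+\alpha_1}$, one defines $\bar a_k^+ := a_k^+ + \epsilon_k \tilde a^+$, $\bar \nu_k := (\nu_k+\epsilon_k\tilde\nu)/|\nu_k+\epsilon_k\tilde\nu|$ and $\bar M_k := M_k+\epsilon_k \tilde M$ (with the constraints restored by a projection step) and verifies that $|u_k - V_{\bar a_k^+,\bar M_k,\bar \nu_k}|\le \epsilon_k r_1^{1+\alpha_1}$ in $B_{r_1}$ for all $k$ large, contradicting the failure of \eqref{Ext12}.

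The main obstacle, and the reason for the calibration $\|M\|\le \delta\epsilon^{1/2}$, is that the approximating polynomials $P^\pm_{M,\nu}$ are not $p$-harmonic when $p\ne 2$. Using $M\nu=0$ and $\mathrm{tr}(M)=0$, a direct computation gives
$$\Delta_p P_{M,\nu} = (p-2)\,|\nabla P_{M,\nu}|^{p-4}\, x^T M^3 x = O(\|M\|^3) = O(\delta^3\epsilon^{3/2}),$$
so the error incurred by treating $V_k$ as an exact solution of \eqref{OVERDETERMINED} is $o(\epsilon_k)$ and vanishes after the $\epsilon_k^{-1}$ normalization. The precise exponent $\tfrac12$ is further dictated by the iteration: at the next step the rescaling $u_{r_1}$ produces a Hessian $r_1\bar M$ and an error $\epsilon r_1^{\alpha_1}$, and $r_1\bar M$ must still obey the smallness assumption $\|r_1\bar M\|\le \delta (\epsilon r_1^{\alpha_1})^{1/2}$. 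The second delicate point is obtaining \emph{uniform} compactness of $\tilde u_k$ up to the free boundary, which is exactly the role of Proposition~\ref{Ext35}, itself resting on the auxiliary $p$-Laplacian Harnack inequality from Appendix~\ref{appendix}; without this refined Harnack estimate the linearization step collapses near $\{x_n=0\}$.
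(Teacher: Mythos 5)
Your broad strategy (contradiction, compactness, linearization to a transmission problem) is the same as the paper's, and the two compactness steps are broadly right, but Step~3 of your argument diverges from the paper's in a way that creates gaps.

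First, the paper does \emph{not} prove $C^{2,\alpha}$ regularity of the limit $u^*$ up to the interface and then Taylor-expand to second order. Instead it invokes a weaker \emph{quadratic estimate} (from \cite[Theorem 3.2]{MR3218810} or \cite[Lemma 4.12]{fotouhi-bayrami2023}) that compares $u^*$ with a \emph{piecewise-linear} two-plane profile $x'\cdot\nu'+\tilde s x_n^+-\tilde t x_n^-$ with error $Cr^2$. That linear-order information is absorbed entirely into new slopes $\overline a_k^\pm$ and a tilted normal $\nu_k$; the new Hessian $\overline M_k$ is only a projection of $M_k$ onto the constraint set $\{\overline M\nu_k=0,\ \mathrm{tr}\,\overline M=0\}$, producing a change $\|M_k-\overline M_k\|=O(\epsilon_k^{3/2})$, not the $O(\epsilon_k)$ second-derivative correction $\epsilon_k\tilde M$ you propose. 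Since the target error is $\epsilon_k r^{1+\alpha_1}$ with $\alpha_1<1$, the $Cr^2$ remainder already suffices, so your extraction of a quadratic term is unnecessary, and you would additionally have to justify that $\tilde M$ can be projected onto the constraint set without destroying the estimate.

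Second, your citation of Appendix~\ref{appendix2} for the boundary regularity/quadratic estimate is misplaced. Lemma~\ref{C-G-S} there is a Liouville-type classification of \emph{global} solutions with monotonicity $\partial_n U\le0$ and quadratic growth; it is used inside the proof of the Linear Dichotomy (Proposition~\ref{LinearDi}), not in the proof of Proposition~\ref{Ext09}, and it does not yield $C^{2,\alpha}$ estimates up to $\{x_n=0\}$.

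Third, in your contradiction setup you keep $\delta$ fixed. The paper sends $\delta_k\to0$, and this is not cosmetic: when verifying the transmission condition for $u^*$ in the viscosity sense, one builds strict sub-/super-solutions via Lemma~\ref{Ext33}, which requires the gap condition $(a^+)^p s-(a^-)^p t>\delta^2\big((a^+)^p+(a^-)^p\big)$. With $\delta$ fixed you only obtain test functions satisfying this strict inequality, so in the limit you can only conclude an inequality with an $O(\delta^2)$ defect rather than the exact relation $a\partial_n^+u^*-b\partial_n^-u^*=0$. Sending $\delta_k\to0$ is what closes this gap; your version leaves the limiting boundary condition not fully identified. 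Also, the ``correction that accounts for the infinitesimal motion of the free boundary'' that you mention does not appear in the paper's linearized problem \eqref{Ext42}; the transmission condition there is exact and homogeneous.

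Your observation that $\Delta_p P_{M,\nu}=(p-2)|\nabla P_{M,\nu}|^{p-4}\,x^TM^3x=O(\|M\|^3)=O(\delta^3\epsilon^{3/2})$ under the constraints $M\nu=0$, $\mathrm{tr}\,M=0$ is correct and is exactly the reason for the calibration $\|M\|\le\delta\epsilon^{1/2}$; this part of your reasoning aligns with the paper.
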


\begin{remark}
\label{LLREM1}
The conclusion of Proposition \ref{Ext09} can be iterated indefinitely. 
Indeed, if \eqref{Ext12} holds, then the rescaled function 
$$ u_r(x):=\frac{u(rx)}{r}, \qquad r=r_1, $$
satisfies
$$ |u_{r}-V_{\overline{a}^{+},r\overline{M},\overline{\nu}}| \leq \epsilon_r:=\epsilon r^{\alpha_1}, \qquad \text{in} \quad B_1, $$
provided that
$$ \|r \overline{M} \| \leq \delta \epsilon_r^{\frac{1}{2}}, \qquad \epsilon_r \leq \epsilon \leq \epsilon_1. $$
This condition is easily verified. Indeed, choosing $\epsilon_1$ sufficiently small, we have 
$$ \|r\overline{M}\| \leq \|r M \|+ C\epsilon r \leq r \delta \epsilon^{\frac{1}{2}}+C \epsilon r \leq \delta \epsilon_r^{\frac{1}{2}} = \delta (\epsilon r^{\alpha_1})^{\frac{1}{2}}. $$
\end{remark}
As a consequence of Proposition \ref{Ext09}, we obtain the following corollary.  
Arguing as in \cite[Section 5]{fotouhi-bayrami2023}, once a solution is sufficiently close to a two-plane solution, we can conclude that the interior two-phase set is a $C^{1,\eta}$ manifold.
\begin{corollary}
\label{Ext13}
There exist universal constants $\epsilon_1, \delta>0$ such that if \eqref{Ext10} and \eqref{Ext11} hold for some $0<\epsilon\leq \epsilon_1$, then $\Gamma_{\mathrm{TP}}(u)$ is a $C^{1,\eta}$ manifold with small $C^{1,\eta}$ norm in $B_{\frac{1}{2}}$. Moreover,
\begin{equation*}
|\nabla u^{+}(0)|\leq a^{+}+C \epsilon,
\end{equation*}
and
\begin{equation*}
\left| \frac{\nabla u^+(0)}{|\nabla u^+(0)|}-\nu \right| \leq C \epsilon,
\end{equation*}
where $C>0$ is a universal constant.
\end{corollary}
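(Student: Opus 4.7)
My plan is to iterate Proposition \ref{Ext09} indefinitely at the origin, as anticipated by Remark \ref{LLREM1}, extract the asymptotic expansion of $u$ at $0$ from this iteration, and then run the same iteration starting from every interior two-phase point of $u$ in $B_{1/2}$ to control $\Gamma^{\mathrm{int}}_{\mathrm{TP}}(u)$.

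First I would construct inductively sequences $(a_k^+,M_k,\nu_k)$ with $(a_0^+,M_0,\nu_0)=(a^+,M,\nu)$ such that, on $B_{r_1^k}$,
\begin{equation*}
|u-V_{a_k^{+},M_k,\nu_k}|\leq \epsilon\,r_1^{k(1+\alpha_1)},\qquad \|M_k\|\leq \delta(\epsilon r_1^{k\alpha_1})^{1/2},
\end{equation*}
with step estimates $|a_{k+1}^{+}-a_k^{+}|+|\nu_{k+1}-\nu_k|+\|M_{k+1}-M_k\|\leq C\epsilon\,r_1^{k\alpha_1}$. The inductive step applies Proposition \ref{Ext09} to the rescaling $u_{r_1^k}$; preservation of the smallness condition on $M_{k+1}$ is exactly the computation carried out in Remark \ref{LLREM1}, provided $\epsilon_1$ is small depending on $r_1,\alpha_1,\delta$. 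The geometric decay of the step estimates makes $(a_k^+)$ and $(\nu_k)$ Cauchy, with limits $a_\infty^+,\nu_\infty$ satisfying $|a_\infty^+-a^+|+|\nu_\infty-\nu|\leq C\epsilon$. Since $M_k\to 0$, passing to the limit in $B_{r_1^k}$ shows that $u$ is asymptotic at the origin to the two-plane solution $U_{a_\infty^+,\nu_\infty}$ with remainder $O(r^{1+\alpha_1})$, so $\nabla u^{\pm}(0)$ exist and equal $a_\infty^{\pm}\nu_\infty$; the two displayed bounds follow at once.

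Next, to obtain the regularity of $\Gamma^{\mathrm{int}}_{\mathrm{TP}}(u)\cap B_{1/2}$, I would verify that the same scheme applies at every $y$ in that set. Expanding $V_{a^+,M,\nu}(y+x)$ about $y$, the linear part of the approximating polynomial becomes $x\cdot(\nu+My)+\tfrac12 x^T M x$ up to a constant, and since $\|M\|\leq\delta\epsilon^{1/2}$ the vector $\nu+My$ differs from $\nu$ by at most $C\delta\epsilon^{1/2}$. After a harmless rescaling one falls again in the regime of Proposition \ref{Ext09} centered at $y$, and the iteration produces a limit unit normal $\nu_\infty(y)$ with $|\nu_\infty(y)-\nu|\leq C\epsilon$. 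A standard Campanato-type comparison of the approximating polynomials at $y_1$ and $y_2$, performed at the first scale $r_1^k$ for which $|y_1-y_2|\approx r_1^k$, forces the linear parts to agree up to $O(\epsilon r_1^{k\alpha_1})$, giving $|\nu_\infty(y_1)-\nu_\infty(y_2)|\leq C\epsilon|y_1-y_2|^{\alpha_1}$. This realises $\Gamma^{\mathrm{int}}_{\mathrm{TP}}(u)\cap B_{1/2}$ as a $C^{1,\alpha_1}$ graph of small norm, exactly as in \cite[Section 5]{fotouhi-bayrami2023}.

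The main technical obstacle will be the bookkeeping at the inductive step: maintaining $\|M_k\|\leq \delta\epsilon_k^{1/2}$ across iterations is delicate because the quadratic coefficients do not improve at the same rate as the overall error, and this is precisely what forces the appearance of $\epsilon^{1/2}$ in hypothesis \eqref{Ext11}. Translation to nearby interior two-phase points is a secondary, milder source of friction, handled by shrinking $\epsilon_1$ once $r_1,\alpha_1,\delta$ from Proposition \ref{Ext09} have been fixed.
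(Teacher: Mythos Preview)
Your proposal is correct and is essentially the argument the paper has in mind: the paper does not give a detailed proof of Corollary~\ref{Ext13} but simply says that it follows by iterating Proposition~\ref{Ext09} (as in Remark~\ref{LLREM1}) and then arguing ``similar to what we have done in \cite[Section~5]{fotouhi-bayrami2023}''. Your iteration at the origin to extract $(a_\infty^+,\nu_\infty)$ and the gradient bounds, together with the translation/Campanato argument at nearby interior two-phase points, is exactly that scheme spelled out.
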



\subsection{Harnack-type inequality}

The proof of the quadratic improvement of flatness relies on the following Harnack-type inequality.

\begin{lemma} 
\label{Ext35}
For every $1<p<\infty$, $\lambda_\pm>0$, and $0<L_0\le L_1$, there exists positive constants  $\overline{\epsilon}, \delta, c>0$ such that the following holds. 
Let $u$ be a solution of \eqref{OVERDETERMINED} satisfying
\begin{equation*}
V(x+\sigma_1 \ee_n ) \leq u(x) \leq V(x+\sigma_2 \ee_n ), \qquad \text{in} \quad B_1,
\end{equation*}
where $0\le \epsilon=\sigma_2-\sigma_1\le\overline{\epsilon} $ and $V(x):=V_{a^+, M, \textbf{e}_n}(x)$ with $a^+\in [L_0, L_1]$.  
Assume further that $M$ satisfies
\begin{equation}
\label{Ext37}
\|M\| \leq \delta \epsilon^{\frac{1}{2}}, \qquad 0<\epsilon\leq \overline{\epsilon}.
\end{equation}
Then, one can find new constants $ \overline{\sigma}_1, \overline{\sigma}_2$ with
\[
0\le \overline{\sigma}_2 - \overline{\sigma}_1 \le c \epsilon
\]
such that 
\[
V(x+\overline{\sigma}_1 \ee_n ) \leq u(x) \leq V(x+\overline{\sigma}_2 \ee_n ), \qquad \text{in} \quad B_{\frac{1}{2}}.
\]
\end{lemma}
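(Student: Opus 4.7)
The plan is to adapt the oscillation-decay technique of De~Silva (see \cite{MR2813524}) and its quadratic refinement from \cite{MR3998636} to the two-phase $p$-Laplace setting. The natural normalized quantity is the scaled deviation
\[
w(x) := \frac{u(x) - V(x+\sigma_1 \ee_n)}{\epsilon},
\]
which by hypothesis satisfies $0 \le w(x) \le \tfrac{1}{\epsilon}\bigl(V(x+\sigma_2\ee_n) - V(x+\sigma_1\ee_n)\bigr)$ in $B_1$. A direct Taylor expansion together with \eqref{Ext37} shows that the upper bound equals, up to an $O(\epsilon^{1/2})$ error, $a^+$ on $\{V>0\}$ and $a^-$ on $\{V<0\}$, so $w$ is an $O(1)$ normalized function trapped between two sliding copies of $V$.

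First I would set up a dichotomy at an interior reference point. Choose $\bar x_+ \in \{V(\cdot+\sigma_1\ee_n)>0\}\cap B_{1/2}$ at a universal distance from the free boundary (for instance $\bar x_+ = \tfrac18\ee_n$, which lies in the positive phase since $M\ee_n=0$ and $\|M\|\ll 1$). Either $w(\bar x_+) \ge \tfrac12 a^+$ (the function is closer to the upper barrier) or $w(\bar x_+) \le \tfrac12 a^+$ (closer to the lower barrier); the two cases are symmetric, so I treat the first.

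Next I would propagate this pointwise information to a uniform improvement via a Harnack inequality. In $\Omega_u^+$ both $u$ and $V(\cdot+\sigma_1\ee_n)$ are respectively $p$-harmonic and approximately $p$-harmonic modulo an error controlled by $\|M\|=O(\epsilon^{1/2})$. Since $|\nabla V|$ is bounded away from $0$ on $\{V>0\}\cap B_{3/4}$ (because $a^+\in[L_0,L_1]$ and $\|M\|\ll 1$), the difference $u - V(\cdot+\sigma_1\ee_n)$ satisfies a uniformly elliptic linear equation obtained from linearizing $\Delta_p$ around $\nabla V$, with right-hand side $O(\epsilon^{3/2})$. Dividing by $\epsilon$ shows that $w$ solves a linear equation with right-hand side $O(\epsilon^{1/2})$. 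Applying the Harnack inequality of Appendix~\ref{appendix} (tailored to this auxiliary $p$-Laplacian-type problem) to $w$ on $\{V(\cdot+\sigma_1\ee_n)\ge c_0\}\cap B_{3/4}$ yields $w \ge c_1$ throughout $\{V(\cdot+\sigma_1\ee_n)\ge c_0\}\cap B_{1/2}$, provided $\bar\epsilon$ is small enough that the $O(\epsilon^{1/2})$ right-hand side is absorbed. In particular,
\[
u(x) \ge V\bigl(x+(\sigma_1+c_2\epsilon)\ee_n\bigr) \qquad \text{on } \{V(\cdot+\sigma_1\ee_n)\ge c_0\}\cap B_{1/2}.
\]

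The step I expect to be the main obstacle is transporting this bulk improvement up to and across the free boundary of $u$. The naive barrier $V(\cdot+(\sigma_1+c_2\epsilon)\ee_n)$ is a legitimate comparison function in the smooth parts but fails to be a strict sub/supersolution of \eqref{OVERDETERMINED} on $\Gamma_{\mathrm{TP}}$, because $V$ is only an approximate solution and the Bernoulli jump condition must be checked in the viscosity sense of Definition~\ref{D1SSDFOFBC}. I would build a slightly deformed comparison function of the form
\[
\widetilde V(x) := V_{a^+,\, M+\eta\epsilon\,\mathcal{M},\, \ee_n}\bigl(x+(\sigma_1+c_3\epsilon)\ee_n\bigr),
\]
where $\mathcal M$ is a fixed trace-zero matrix with $\mathcal M\ee_n=0$ and $\eta$ is small and universal, tuned so that: (i) $\widetilde V$ lies below $V(\cdot+\sigma_2\ee_n)$ on $\partial B_{3/4}$; (ii) $\widetilde V\le u$ on the region where the previous step already gives the improvement; (iii) at a first interior contact point on $\Gamma_{\mathrm{TP}}(u)$, the condition (A.3) of Definition~\ref{D1SSDFOFBC} is violated, since the extra quadratic term yields a strict gain $|\nabla\widetilde V^+|^p-|\nabla\widetilde V^-|^p > \lambda_+^p-\lambda_-^p$ at the touching point. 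The assumption $\|M\|\le\delta\epsilon^{1/2}$ enters crucially here: it ensures that the original quadratic correction is of lower order than the designed perturbation $\eta\epsilon\,\mathcal M$, so that the sign in (iii) is decided by the new term. Ruling out contact by the comparison principle yields $u\ge V(\cdot+\overline\sigma_1\ee_n)$ in $\overline{B_{1/2}}$ with $\overline\sigma_1 = \sigma_1+c_3\epsilon$; together with $\overline\sigma_2=\sigma_2$ this gives $\overline\sigma_2-\overline\sigma_1 = (1-c_3)\epsilon$, as required. The symmetric case and the negative-phase analysis follow along identical lines.
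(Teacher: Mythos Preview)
Your overall architecture is right and matches the paper: set up a dichotomy at an interior reference point, use a Harnack inequality to propagate the gain in the bulk, then slide a comparison function to push the improvement across the free boundary. However, the specific barrier you propose is where the argument breaks down.

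Your $\widetilde V = V_{a^+,\,M+\eta\epsilon\mathcal M,\,\ee_n}(\cdot+(\sigma_1+c_3\epsilon)\ee_n)$ with $\mathrm{tr}(\mathcal M)=0$ and $\mathcal M\ee_n=0$ is not a strict subsolution of \eqref{OVERDETERMINED}. In either phase, writing $P=x_n+\tfrac12 x^T M'x$ with $M'=M+\eta\epsilon\mathcal M$, one computes
\[
\Delta_p P = (p-2)\,|\nabla P|^{p-4}\,x^T (M')^3 x,
\]
because $\mathrm{tr}(M')=0$ and $M'\ee_n=0$ kill the leading terms. This is $O(\|M'\|^3)=O(\epsilon^{3/2})$ with no sign, so you cannot rule out interior touching at order $\epsilon$. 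On the free boundary the situation is just as bad: since you keep the slopes $a^\pm$ unchanged and the two phases share the same gradient factor $|\ee_n+M'x|$, you get
\[
|\nabla\widetilde V^+|^p-|\nabla\widetilde V^-|^p=(\lambda_+^p-\lambda_-^p)\,|\ee_n+M'x|^p,
\]
which equals $\lambda_+^p-\lambda_-^p$ at $x'=0$ and exceeds it only by $O(\delta^2\epsilon)$ elsewhere; there is no uniform strict gain to contradict (A.3). In short, a trace-zero tangential matrix perturbation with no slope change has exactly the same approximate-solution structure as $V$ itself, so it cannot serve as a strict comparison function.

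What the paper does instead is to introduce two separate mechanisms for strictness. First, the barrier is built from a radial profile $\psi(x)=c(r_0^{-\gamma}-|x-x_0|^{-\gamma})$ in an annulus around the bulk ball; this has $\Delta\psi+(p-2)\partial_{nn}\psi<0$ for suitable $\gamma$, yielding $\Delta_p(P-\epsilon\psi)\ge c(n,p)\epsilon>0$ at order $\epsilon$ (not $\epsilon^{3/2}$). Second, the positive slope is tilted to $(1+\epsilon\ell)a^+$ while $a^-$ is left alone; this asymmetry is precisely what forces $|\nabla v_t^+|^p-|\nabla v_t^-|^p>\lambda_+^p-\lambda_-^p$ strictly on the free boundary (compare Lemma~\ref{Ext33}, where the analogous condition is $(a^+)^p s-(a^-)^p t>0$). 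If you want a polynomial rather than radial barrier, you would need a matrix $N$ with $\mathrm{tr}(N)+(p-2)N_{nn}>0$, not $\mathrm{tr}(N)=0$ and $N\ee_n=0$.

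A secondary point: your interior Harnack step asserts that $u-V$ satisfies a uniformly elliptic linear equation obtained by linearizing $\Delta_p$ around $\nabla V$. That linearization is around $\nabla u$, and nothing rules out $|\nabla u|$ being small at the reference point, which would make the equation degenerate. The paper handles this by splitting into two cases: when $|\nabla u(\bar x)|$ is bounded below one linearizes as you suggest, but when $|\nabla u(\bar x)|$ is small one applies the intrinsic Harnack inequality of Appendix~\ref{appendix} directly to $h=u-V$ written as $\mathrm{div}(|\nabla h + a^+\ee_n + a^+ Mx|^{p-2}(\cdots))=0$, together with a short translation argument. You cite the right lemma, but the case distinction is not optional.
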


\begin{proof}
Without loss of generality, we may assume that  $\lambda_+\ge \lambda_-$.
This also implies that $a^+ \ge a^-$. 

Let $\overline{x}=\frac{1}{5}\textbf{e}_n$ and $\sigma=\frac{(\sigma_2-\sigma_1)a^+}2$. We will show that  
\begin{equation}
\label{Ext38}
u(\overline{x}) \geq V(\overline{x}+\sigma_1\ee_n) + \sigma \quad \Longrightarrow \quad 
u(x) \geq V(x+\overline{\sigma}_1\ee_n), \qquad \text{in} \quad B_{\frac{1}{2}},
\end{equation}
for $\overline{\sigma}_1 = \sigma_1 + c\epsilon$, where $0<c<1$ is a universal constant.
Similarly,
\begin{equation*}
u(\overline{x}) \leq V(\overline{x}+\sigma_2\ee_n)- \sigma  \quad \Longrightarrow \quad 
u(x) \leq V(x+\overline{\sigma}_2\ee_n), \qquad \text{in} \quad B_{\frac{1}{2}},
\end{equation*}
for $\overline{\sigma}_2 = \sigma_2 - c\epsilon$.

We prove only the first implication. The second follows by the same arguments.

\medskip

\textbf{Step 1:}
We first show that
\begin{equation} 
\label{MUL3}
u(x) \geq V(x+\sigma_1\ee_n) + c_0 \epsilon, \qquad \text{for all} \quad x \in B_{2r_0}(x_0),
\end{equation}
for some universal constants $c_0$ and  $r_0$, where $B_{4r_0}(x_0)\subset \{x_n>0\}$.  Moreover,
$x_0$ is either $\overline{x}$ or $x_0 = \overline{x}-2r_1\textbf{e}_n$ for some sufficiently small $r_1>0$.
To do this, we distinguish two cases:

\textbf{Case (i)}. Suppose $|\nabla u(\overline{x})| < \frac{1}{4}a^+$ for $\overline{x}=\frac{1}{5}\textbf{e}_n$. 
Then, there exists $r_1=r_1(n,p,L_0)>0$ such that $|\nabla u(x)| < \frac{1}{2}a^+$ in $B_{4r_1}(\overline{x})$; 
note that $u$ is universally bounded and $p$-harmonic in $B_{\frac{1}{10}}(\overline{x})$.
Now, by invoking Lemma \ref{auxiliaryLemma} in $B_{4r_1}(\overline{x})$ for $h:=u-V$, one can guarantee that
$$ u(x)-V(x+\sigma_1\ee_n) \geq C^{-1} \left( u(\overline{x})-V(\overline{x}+\sigma_1\ee_n) \right)-a^+r_1, \qquad \text{in} \quad B_{r_1}(\overline{x}), $$  
for an appropriate universal constant $C=C(n,p)>0$.
By the assumption \eqref{Ext38}, we obtain that for all $x \in B_{r_1}(\overline{x})$
$$ 
\begin{aligned}
C^{-1} \sigma - a^+r_1 & \leq u(x)-V(x+\sigma_1\ee_n) \\
& \leq u(x-2r_1\textbf{e}_n)-V(x+(\sigma_1-2r_1)\textbf{e}_n) \\
& \quad + \left( V(x+(\sigma_1-2r_1)\textbf{e}_n)-V(x+\sigma_1\ee_n) \right) + 2r_1 \norm{\nabla u}_{L^\infty(B_{4r_1}(\overline{x}))} \\
& \leq u(x-2r_1\textbf{e}_n)-V(x+(\sigma_1-2r_1)\textbf{e}_n)  -2a^+r_1 + 2r_1 \norm{\nabla u}_{L^\infty(B_{4r_1}(\overline{x}))} \\
& \leq u(x-2r_1\textbf{e}_n)-V(x+(\sigma_1-2r_1)\textbf{e}_n) - a^+r_1.
\end{aligned}
$$
Thus, with $x^*=\overline{x}-2r_1\textbf{e}_n$, we get
\begin{equation}
\label{MUL1}
C^{-1} \sigma \leq u(x)-V(x+\sigma_1\ee_n), \qquad \text{for all} \quad x \in B_{r_1}(x^*).
\end{equation}

\medskip

\textbf{Case (ii)}. Suppose $|\nabla u(\overline{x})| > \frac{1}{4}a^+$. 
By the interior gradient estimate, we know that $|\nabla u|$ is bounded in $B_{\frac{1}{10}}(\overline{x})$, and  there exists a constant $0<r_0=r_0(n,p,L_0,L_1)$, with $8r_0 \leq \frac{1}{10}$ such that
$$ \frac{1}{8}a^+ \leq |\nabla u(x)| \leq Ca^+, \qquad \text{for all} \quad x \in B_{8r_0}(\overline{x}), $$
for an appropriate universal constant $C=C(n,p)>0$. Now, $u$ will be the weak solution to the following uniformly elliptic equation
$$ \sum_{i,j=1}^n  \theta_{ij}  \partial_{x_ix_j} u = 0, \qquad \text{in} \quad B_{4r_0}(\overline{x}), $$
with $\theta_{ij}=\delta_{ij} + (p-2)|\nabla u|^{-2} \partial_{x_i}u \partial_{x_j}u $. 
Consequently, the function $h=u-V$ satisfies 
$$ 
\begin{aligned}
 \sum_{i,j=1}^n  \theta_{ij}  \partial_{x_ix_j} h =- a^+ \sum_{i,j=1}^n  \theta_{ij} M_{ij}  & =  -a^+ \mathrm{tr}(M)- a^+(p-2)\frac{\left(\nabla u \right)^TM\nabla u}{|\nabla u|^2} \\
& = -\frac{a^+(p-2)} {|\nabla u|^2} (\nabla u+Mx)^T M\nabla h - a^+(p-2) \frac{x^T M^3x}{|\nabla u|^2}.
\end{aligned}
$$
Then, recalling \eqref{Ext37}, and applying Harnack's inequality (see e.g. \cite[Chapter 9]{MR1814364}),  
one can guarantee that
\begin{equation}
\label{MUL2}
C^{-1} \sigma \leq u(x)-V(x+\sigma_1\ee_n), \qquad \text{for all} \quad x \in B_{r_0}(\overline{x}).
\end{equation}

\medskip

Thus, summarizing \eqref{MUL1} and \eqref{MUL2}, we get \eqref{MUL3} with $x_0=x^*$, or $x_0=\overline{x}$, and the  universal constant $c_0$.

\medskip

\textbf{Step 2:}
In this step, we will construct a barrier. 
For a given positive constant $\gamma>0$, let us define
$$ \psi(x)=c \left(r_0^{-\gamma}  -  |x-x_0|^{-\gamma} \right), $$
in the closure of the annulus $A:=B_{\frac{3}{4}}(x_0) \setminus B_{r_0}(x_0)$. 
Also, the positive constant $c$ is chosen so that  $\psi$ satisfies the following boundary conditions
\[\left\{\begin{array}{ll}
\psi= 1,  \quad & \text{on} \quad \partial B_{\frac{3}{4}}(x_0),\\
\psi= 0,  \quad & \text{on} \quad \partial B_{r_0}(x_0).
\end{array}\right.\] 
Extend $\psi$ to be equal to zero on $B_{r_0}(x_0)$. We can choose $\gamma=\gamma(n,p)$ such that 
\begin{equation}
\label{MUL1.5}
\Delta_p(P - \epsilon \psi)\ge c(n,p)\epsilon>0, \qquad \text{in} \quad A,
\end{equation}
for sufficiently small $\epsilon>0$, where $P(x)=\sigma_1+ x \cdot \ee_n+\frac12 x^TMx$.
To see this, we compute
\begin{align*}
\Delta_p(P - \epsilon \psi)  & =\mathrm{div}\left(|\ee_n +Mx - \epsilon\nabla \psi|^{p-2} (\ee_n +Mx - \epsilon\nabla \psi)\right)\\
& =|\ee_n +Mx - \epsilon\nabla \psi|^{p-4}\Big( - \epsilon |\ee_n +Mx - \epsilon\nabla \psi|^{2} \Delta \psi \\
& \quad + (p-2)(\ee_n +Mx - \epsilon\nabla \psi)^T(M - \epsilon D^2\psi)(\ee_n +Mx- \epsilon\nabla \psi)\Big) \\
& = -\epsilon \left(\Delta \psi + (p-2)\partial_{nn} \psi \right) + O(\epsilon^{\frac{3}{2}}).
\end{align*}
On the other hand,
\[
\Delta \psi + (p-2)\partial_{nn} \psi = -\frac{c\gamma}{|x-x_0|^{\gamma+4}} \left( (\gamma-n-p+4)|x-x_0|^2 + (p-2)(\gamma+2) |(x-x_0)\cdot\ee_n|^2\right).
\]
If we choose $\gamma > n+p-4$, when $p\ge 2$, and $\gamma > \frac{n-p}{p-1}$, when $1<p < 2$, then \eqref{MUL1.5} holds for small $\epsilon>0$.

\medskip

\textbf{Step 3:}
Now, define for $t \geq 0$
\[
v_t(x):= (1+\epsilon \ell) a^+\left(P(x)+\epsilon\tilde c(t- \psi(x)-k)\right)^+ 
-a^-\left(P(x)+\epsilon\tilde c(t- \psi(x)-k)\right)^-,
\]
where  $P(x)=\sigma_1+ x \cdot \ee_n+\frac12 x^TMx$, $k=\frac12(1-\|\psi\|_{L^{\infty}(B_{\frac{1}{2}}(0))})$, $\tilde c = \frac{c_0}{a^+}$ for $c_0$ in \eqref{MUL3},
and $\ell$ is sufficiently small so that   
\[
\ell P(x) \le \tilde c \min\left( \frac k2,  cr_0^{-\gamma}(1-2^{-\gamma})\right), \qquad \text{in} \quad B_{\frac{3}{4}}(x_0).
\]
In this step, we show that
$$ v_0(x)  \leq u(x), \qquad x \in \overline{B_{\frac{3}{4}}(x_0)}. $$
We first consider $x\in B_{2r_0}(x_0)$. By \eqref{MUL3},
\[
v_0(x) \le (1+\epsilon \ell) a^+P(x)^+ \le  V(x+\sigma_1\ee_n) + a^+ \tilde c\epsilon = V(x+\sigma_1\ee_n) + c_0 \epsilon \le u(x).
\]
Next, let $x\in B_{\frac{3}{4}}(x_0)\setminus B_{2r_0}(x_0)$. 
In this region,
$$\tilde c \psi(x) \ge \tilde c cr_0^{-\gamma}(1-2^{-\gamma}) \ge \ell P(x).$$ 
Now, assume that $P(x) - \epsilon \tilde c (\psi(x)+k)\ge 0$, hence
\[\begin{split}
v_0(x) & =(1+\epsilon \ell) a^+ \left(P(x) - \epsilon \tilde c( \psi(x)+k)\right) \\
& \leq V(x+\sigma_1\ee_n) + \epsilon a^+ \left(\ell  P(x) -(1+\epsilon \ell)\tilde c\psi(x)\right)\\
& \leq V(x+\sigma_1\ee_n) \le u(x).
\end{split}\]
If instead  $P(x)  - \epsilon \tilde c (\psi(x)+k)\le 0$, the result is obvious when $0\le P(x)$; because $v_0(x) \le 0\le a^+ P(x) =V(x+\sigma_1\ee_n)\le u(x)$. If $P(x)\le 0$,
\[
v_0(x) = a_-(P(x) - \epsilon \tilde c (\psi(x)+k)) \le a_-P(x) = V(x+\sigma_1\ee_n) \le u(x).
\]

\textbf{Step 4:}
Let $\overline{t}$ be the largest $t \geq 0$ such that
$$ v_t(x) \leq u(x), \qquad \text{in} \quad \overline{B_{\frac{3}{4}}(x_0)}. $$
We claim that $\overline{t} \geq 1$.
Suppose this is true, then we will have
\[\begin{split}
 u(x) \geq  v_{1}(x) & =(1+\epsilon \ell) a^+\left(P(x)+\epsilon\tilde c(1- \psi(x)-k)\right)^+ 
-a^-\left(P(x)+\epsilon\tilde c(1- \psi(x)-k)\right)^- \\
 & \ge V(x)+c \epsilon,
\qquad \text{in} \quad B_{\frac{1}{2}}(0) \Subset B_{\frac{3}{4}}(x_0), 
\end{split}\]
where $c$ is a universal constant, and therefore we obtain \eqref{Ext38}. 
Note that in the last inequality, we have used that $\|\psi\|_{L^{\infty}(B_{\frac{1}{2}}(0))} =1-2k$, and $a^+ \ge a^-$.

Suppose now that $\overline{t} < 1$. Then there exists $\tilde{x}\in \overline{B_{\frac{3}{4}}(x_0)}$ such that
\begin{equation}
\label{ContrTouch}
v_{\overline{t}}(\tilde{x})=u(\tilde{x}).
\end{equation}
We show that such a touching point must lie in $\overline{B_{r_0}(x_0)}$. Indeed, since $\psi =1$ on $\partial B_{\frac{3}{4}}(x_0)$, from the definition of $v_t$, we get that for $t < 1$,
\[
\begin{split}
v_t(x)  & =(1+\epsilon \ell) a^+\left(P(x)+\epsilon\tilde c(t- 1-k)\right)^+ 
-a^-\left(P(x)+\epsilon\tilde c(t- 1-k)\right)^- < V(x). 
\end{split}
\]
Also, $\tilde{x}$ cannot belong to the annulus $A$. Indeed, in $\{x \in A \, : \, u(x)\ne 0 \}$, we have $\Delta_p u(x)=0$ and by the definition of viscosity solution for $p$-harmonic function, it follows that $\Delta_p (v_{\overline{t}})(\tilde{x}) \le 0$ at touching point.
This contradicts \eqref{MUL1.5} for $\epsilon$ small enough. 

Finally, if $\tilde{x} \in \{u=0\}$, we check the free boundary condition at $\tilde x$.
\begin{align*}
| \nabla v_{\overline{t}}^+ (\tilde x) |^p - |\nabla v_{\overline{t}}^- (\tilde x)|^p  & = \left(a_+^p(1+\epsilon\ell)^p-a_-^p \right) \left|\ee_n + M\tilde{x} - \epsilon \tilde c \nabla \psi (\tilde x) \right|^p\\
& = \left(\lambda_+^p - \lambda_-^p + pa_+^p \epsilon \ell + O(\epsilon^2) \right) \left(1+\frac{p}{2}|M\tilde{x}|^2 - p \epsilon \tilde c  \partial_n \psi(\tilde{x}) )+O(\epsilon^{\frac32})\right). 
\end{align*}
One can verify that 
\[
\partial_n\psi(\tilde x)= c\gamma |\tilde x- x_0|^{-\gamma-2}(\tilde x-x_0)\cdot\ee_n  < - C<0,
\]
for a universal constant $C$; note that $\tilde x_n=O(\epsilon^{\frac{1}{2}})$. 
Taking into account $\|M\| \le \delta\epsilon^{\frac12}$ and $\lambda_+\ge \lambda_-$, we get
$$ |\nabla v_{\overline{t}}^+(\tilde x)|^p - |\nabla v_{\overline{t}}^-(\tilde x)|^p >\lambda_+^p-\lambda_-^p,$$
as long as $\delta, \epsilon$  are small enough.

Therefore, $\tilde{x} \in \overline{B_{r_0}(x_0)}$, and  for $\epsilon$ sufficiently small
\[
\begin{split}
 v_{\overline{t}}(\tilde{x}) & =(1+\epsilon \ell) a^+\left(P(\tilde x)+\epsilon\tilde c(\overline{t}-k)\right) \\
 & < V(\tilde{x}) + \epsilon a^+ \left(\ell P(\tilde x) + \tilde c(1+\epsilon\ell)(1-k)\right)\\
 & \le V(\tilde{x}) + \epsilon c_0  \left( \frac k2 + (1+\epsilon\ell)(1-k)\right) \\
 & < V(\tilde{x})+  \epsilon c_0 \le u(\tilde x),
\end{split}
\]
contradicting \eqref{ContrTouch}; where in the last inequality, we have used \eqref{MUL3}.
\end{proof}


\subsection{Constructing comparison functions}

In this subsection, our goal is to construct suitable strict subsolutions to the two-phase problem in $B_1$ that are $\epsilon$-perturbations of $V$. Similarly, strict supersolutions can be also constructed.

Given $\epsilon, \delta>0$, assume that
$$\nu=\textbf{e}_n, \qquad \|M\| \leq \delta \epsilon^{\frac{1}{2}}, $$
and due to the condition \eqref{CCFFNO1} on $\nu$ and $M$, we  have:
$$ M\textbf{e}_n=0, \qquad \mathrm{tr}(M)=0.$$
In the sequel, we  write $x=(x', x_n) \in \mathbb{R}^n$, where $x'=(x_1, \cdots, x_{n-1}) \in \mathbb{R}^{n-1}$.

Let $s,t,A \in \mathbb{R}$, $\xi' \in \mathbb{R}^{n-1}$ and $N=(N_{ij})$ be a symmetric  $n \times n$ matrix satisfying
\begin{equation}
\label{NMatrixCon}
\mathrm{tr}( N) + (p-2) N_{nn} >0, \qquad  \text{and} \qquad N\textbf{e}_n= N_{nn}\textbf{e}_n.
\end{equation}
Given $\epsilon>0$, define
$$ \overline{a}^+:=a^+(1+\epsilon s), \qquad \overline{a}^-=a^-(1+\epsilon t), \qquad \overline{M}:=M+2\epsilon N, $$
and 
$$ Q(x):= x \cdot \textbf{e}_n + \frac{1}{2} x^T \overline{M} x + A \epsilon + \epsilon \xi' \cdot x', $$
or equivalently,
$$ Q(x)= P_{\overline{M},\textbf{e}_n+ \epsilon (\xi',0)}(x) + A \epsilon. $$
We then define
\begin{equation}
\label{Ext32}
v(x):=\overline{a}^+ Q^+-\overline{a}^-Q^-.
\end{equation}

\begin{lemma}
\label{Ext33}
Suppose that the symmetric matrix $N$ satisfies \eqref{NMatrixCon}. If $s$ and $t$ satisfy
\begin{equation}
\label{Ext34}
\left(a^+ \right)^p s - \left(a^- \right)^{p}  t >  \delta^2 \left( \left(a^+ \right)^p+ \left(a^- \right)^p \right),
\end{equation}
then the function $v$ defined in \eqref{Ext32} is a strict subsolution to \eqref{OVERDETERMINED} in $B_1$ for all sufficiently small  $\epsilon$.
\end{lemma}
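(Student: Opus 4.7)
The plan is to verify directly the two strict inequalities that make up a strict subsolution of \eqref{OVERDETERMINED}: strict $p$-subharmonicity inside each phase, and the strict gradient-jump condition at the free boundary. Note that since $\nabla Q = \textbf{e}_n + \overline{M}x + \epsilon(\xi',0)$ is within $O(\epsilon^{1/2})$ of $\textbf{e}_n$ uniformly in $B_1$, the zero set $\{Q=0\}$ is a smooth hypersurface across which $v$ changes sign; hence every free-boundary point of $v$ is a two-phase point and there is no one-phase free boundary to handle. The only two conditions to verify are $\Delta_p v>0$ in $\{v>0\}\cup\{v<0\}$ and $|\nabla v^+|^p-|\nabla v^-|^p>\lambda_+^p-\lambda_-^p$ on $\{Q=0\}$. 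The ancillary bounds $|\nabla v^\pm|>\lambda_\pm$ follow automatically from the strict inequalities $a^\pm>\lambda_\pm$ (the relevant case at an interior two-phase point) and the smallness of $\epsilon$.

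\textit{Step 1 ($p$-subharmonicity in each phase).} In $\{Q>0\}$ one has $v=\overline{a}^+Q$, and in $\{Q<0\}$, $v=\overline{a}^-Q$; in both cases $\Delta_p v=(\overline{a}^\pm)^{p-1}\Delta_p Q$, so I only need $\Delta_p Q>0$. Using $D^2Q=\overline{M}=M+2\epsilon N$, $\mathrm{tr}(M)=0$, $M\textbf{e}_n=0$, and $N\textbf{e}_n=N_{nn}\textbf{e}_n$, a Taylor expansion gives
\begin{equation*}
\Delta_p Q = |\nabla Q|^{p-4}\Big(|\nabla Q|^2\,\mathrm{tr}(\overline{M}) + (p-2)(\nabla Q)^T\overline{M}(\nabla Q)\Big) = 2\epsilon|\nabla Q|^{p-4}\big(\mathrm{tr}(N)+(p-2)N_{nn}\big) + O(\epsilon^{3/2}).
\end{equation*}
By \eqref{NMatrixCon} the bracketed coefficient is strictly positive, hence $\Delta_p v>0$ inside each phase for all sufficiently small $\epsilon$.

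\textit{Step 2 (strict jump condition on $\{Q=0\}$).} At $x_0\in\{Q=0\}$ one has $|\nabla v^\pm(x_0)|=\overline{a}^\pm|\nabla Q(x_0)|$, so
\begin{equation*}
|\nabla v^+|^p-|\nabla v^-|^p = \big((\overline{a}^+)^p-(\overline{a}^-)^p\big)|\nabla Q|^p,
\end{equation*}
and expanding the slopes,
\begin{equation*}
(\overline{a}^+)^p-(\overline{a}^-)^p = \lambda_+^p-\lambda_-^p + p\epsilon\big((a^+)^ps-(a^-)^pt\big) + O(\epsilon^2).
\end{equation*}
For $|\nabla Q(x_0)|^p$, note that $Q(x_0)=0$ forces $(x_0)_n=O(\epsilon^{1/2})$, and a direct computation using $M\textbf{e}_n=0$, $N\textbf{e}_n=N_{nn}\textbf{e}_n$ yields
\begin{equation*}
|\nabla Q(x_0)|^2 = 1 + |\overline{M}x_0|^2 + O(\epsilon^{3/2}), \qquad |\nabla Q(x_0)|^p = 1 + \tfrac{p}{2}|\overline{M}x_0|^2 + O(\epsilon^{3/2}).
\end{equation*}
Now $0\le |\overline{M}x_0|^2\le \|\overline{M}\|^2\le \delta^2\epsilon + O(\epsilon^{3/2})$, and since $|\lambda_+^p-\lambda_-^p|\le (a^+)^p+(a^-)^p$, multiplying through gives
\begin{equation*}
|\nabla v^+|^p-|\nabla v^-|^p-(\lambda_+^p-\lambda_-^p) \ge -\tfrac{p}{2}\delta^2\epsilon\big((a^+)^p+(a^-)^p\big) + p\epsilon\big((a^+)^ps-(a^-)^pt\big) + O(\epsilon^{3/2}).
\end{equation*}
Assumption \eqref{Ext34} makes this at least $\tfrac{p}{2}\delta^2\epsilon\big((a^+)^p+(a^-)^p\big) + O(\epsilon^{3/2})>0$ for all small $\epsilon$.

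\textit{Main obstacle.} The subtlety is that the curvature correction $\tfrac{p}{2}|\overline{M}x_0|^2(\lambda_+^p-\lambda_-^p)$ coming from the expansion of $|\nabla Q|^p$ has the \emph{same} order $O(\epsilon)$ as the slope gain $p\epsilon\big((a^+)^ps-(a^-)^pt\big)$ we extract from \eqref{Ext34}, and its sign is not a priori controlled by $\mathrm{sgn}(\lambda_+^p-\lambda_-^p)$. The construction overcomes this in two steps: the quantitative smallness $\|M\|\le \delta\epsilon^{1/2}$ bounds $|\overline{M}x_0|^2$ by $\delta^2\epsilon+O(\epsilon^{3/2})$, and the factor $\delta^2$ is built precisely into the right-hand side of \eqref{Ext34} to dominate the worst-case loss from this curvature term. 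Step~1 is routine once the Taylor expansion is written down; Step~2 is the delicate one and is where the hypotheses \eqref{NMatrixCon}, \eqref{Ext37}, \eqref{Ext34} are each used.
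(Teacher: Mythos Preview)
Your proof is correct and follows essentially the same approach as the paper: a direct Taylor expansion of $\Delta_p Q$ to extract the leading term $2\epsilon(\mathrm{tr}(N)+(p-2)N_{nn})$, followed by an expansion of the gradient jump on $\{Q=0\}$ in which the quadratic curvature contribution of size $\tfrac{p}{2}|Mx|^2\le \tfrac{p}{2}\delta^2\epsilon$ is dominated by the slope gain coming from \eqref{Ext34}. The only cosmetic difference is that you factor the jump as $\big((\overline{a}^+)^p-(\overline{a}^-)^p\big)|\nabla Q|^p$ and bound the cross term via $|\lambda_+^p-\lambda_-^p|\le (a^+)^p+(a^-)^p$, whereas the paper expands $|\nabla v^+|^p$ and $|\nabla v^-|^p$ separately before subtracting; both routes lead to the same inequality $(a^+)^p s-(a^-)^p t>\delta^2\big((a^+)^p+(a^-)^p\big)$ being exactly what is needed.
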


\begin{proof}
In $ \Omega_v^{+} \cap B_1$, we have 
$$ 
\begin{aligned}
\Delta_p  Q^{+}  & = \mathrm{div}\left( \left|\nabla \left( Q^{+}\right) \right|^{p-2} \nabla Q^{+}  \right) \\
& =  \left| \nabla Q^{+} \right|^{p-4} \left( \left| \nabla Q^{+}  \right|^{2}\Delta Q^{+} + (p-2) (\nabla Q^{+})^T D^2 Q^{+}  \nabla Q^{+} \right) \\
& =  \left| \nabla Q^{+}  \right|^{p-4} \left(  \left| \nabla Q^{+}  \right|^{2} \mathrm{tr}(M+2\epsilon N) +(p-2) \left( \nabla Q^{+} \right)^T (M+2 \epsilon N) \left( \nabla Q^{+} \right) \right) \\
& = \left| \nabla Q^{+}  \right|^{p-4} \Big[(p-2)x^TM^3x+2\epsilon \big (\left| Mx+ \textbf{e}_n \right|^{2} \mathrm{tr}( N) +(p-2)(Mx+ \textbf{e}_n )^TN(Mx+ \textbf{e}_n ) \\
&\ \ \  + (p-2)(2Nx+\xi')^TM^2x\big)
+ O(\epsilon^2) \Big] \\
& \geq  \left| \nabla Q^{+}  \right|^{p-4} \left[ -(p-2) \delta^3\epsilon^{\frac{3}{2}} + 2\epsilon \left(\mathrm{tr}( N) + (p-2)N_{nn}-2\delta \epsilon^{\frac{1}{2}}\|N\| \right) + O(\epsilon^2)\right] \\
&= 2\epsilon \left| \nabla Q^{+}  \right|^{p-4} \left(\mathrm{tr}( N) + (p-2)N_{nn} + O( \epsilon^{\frac{1}{2}})\right),
\end{aligned}
$$
where the implicit constant in $O( \epsilon^{\frac{1}{2}})$ can be controlled by $\delta, N, p, n$.
Hence, for sufficiently small $\epsilon$, we have
$ \Delta_p \left( \overline{a}^{+} Q^{+} \right) > 0$, in $ \Omega_v^{+} \cap B_1$. Similar computations show that $ \Delta_p \left( -\overline{a}^{-} Q^{-} \right) > 0$, in $ \Omega_v^{-} \cap B_1$. Thus, $v$ is a strict subsolution of \eqref{OVERDETERMINED} in $B_1$.

It remains to verify the free boundary condition. We claim that
$$ |\nabla v^+|^p-|\nabla v^-|^p>\lambda_+^p-\lambda_-^p. $$
Indeed,
$$ 
\begin{aligned}
|\nabla v^+|^p & =(\overline{a}^+)^p \left( 1+|\overline{M}x|^2+\epsilon^2 |\xi'|^2 + 4 \epsilon (Nx) \cdot \textbf{e}_n + 2 \epsilon (\overline{M} x) \cdot \xi' \right)^{\frac{p}{2}}\\
  & =(a^+)^p\left(1+p \epsilon  \left( s+ \frac12 |Mx|^2 + 2 (Nx) \cdot \textbf{e}_n \right)  + O(\epsilon^{\frac{3}{2}})\right).
\end{aligned}
$$
On $\{v=0\}$, we have $x_n=O(\epsilon^{\frac{1}{2}})$. Since $N\ee_n=N_{nn}\ee_n$, it follows that $(Nx) \cdot \textbf{e}_n = O(\epsilon^{\frac{1}{2}})$. 
Together with assumption $\|{M}\|  \leq  \delta \epsilon^{\frac{1}{2}}$, this results in
$$ |\nabla v^+|^p \geq (a^+)^p \left( 1 +p \epsilon \left(s-\delta^2 \right) \right), $$
provided $\epsilon$ is sufficiently small; depending on $s$, $t$, $N$, $\xi'$, and $\delta$.
Similarly,
$$ |\nabla v^-|^p \leq (a^-)^p \left( 1+p\epsilon \left(t+\delta^2 \right) \right).$$
Thus, according to \eqref{Ext34} and the condition $(a^+)^p-(a^-)^p=\lambda_+^p-\lambda_-^p$, the boundary condition holds.
\end{proof}


\subsection{Proof of Proposition \ref{Ext09}}

Let $V_{a^{+},M,\nu}$ be the quadratic polynomial defined in \eqref{QPNOT}. 
Given a continuous function $v$ in $B_1$, we define its $\epsilon$-linearization around the function $V_{a^{+},M,\nu}$ by
\begin{equation}
\label{Ext31}
\tilde{v}_{a^{+},M,\nu, \epsilon}(x):=
\begin{cases}
\dfrac{v(x)-a^+ P_{M,\nu}(x)}{a^+ \epsilon},  & \qquad x \in \overline{\Omega_v^+ \cap B_1}, \\
\\
\dfrac{v(x)-a^- P_{M,\nu}(x)}{a^- \epsilon},  & \qquad x \in \overline{\Omega_v^- \cap B_1}.
\end{cases}
\end{equation}
In what follows, we will typically drop the indices from $V$, $P$, $\tilde{v}$ whenever there is no possibility of confusion.

\medskip

\begin{proof}[Proof of Proposition \normalfont{\ref{Ext09}}]
The proof is divided into three steps.

\medskip

{\bf Step 1. Compactness.} Fix a universal radius $r_1$,  be specified later in Step 3. 
Arguing by contradiction, assume that there exist sequences $\epsilon_k \to 0$, and $\delta_k \to 0$, and also a sequence $u_k$ of solutions to \eqref{OVERDETERMINED} in $D=B_1$ such that
\begin{equation}
\label{Ext40}
|u_k-V_{a_k^{+}, M_k, \textbf{e}_n}|\leq \epsilon_k, \qquad \text{in} \quad B_1, \qquad \text{and} \qquad 0 \in \Gamma_{\mathrm{TP}}(u_k),
\end{equation}
with 
$$ \|M_k\|\leq \delta_k \epsilon_k^{\frac{1}{2}}, \quad M_k\textbf{e}_n=0,\quad \left(a_k^+ \right)^p- \left(a_k^- \right)^p= \lambda_+^p-\lambda_-^p, \quad a_k^{+} \in \left[L_0, L_1\right], $$
but $u_k$ does not satisfy the conclusion \eqref{Ext12}.

Let $\tilde{u}_k:=\tilde{u}_{a_k^{+}, M_k, \textbf{e}_n, \epsilon_k}$ be defined as in \eqref{Ext31}. Then, \eqref{Ext40} gives us
\begin{equation*}
|\tilde{u}_k|\leq C, \qquad \text{for} \quad x\in B_1. 
\end{equation*}
By Lemma \ref{Ext35}, the oscillation of $\tilde{u}$ decreases by a factor $1-c$ when passing from $B_1$ to $B_{\frac{1}{2}}$ (see e.g. \cite[proof of Lemma 7.13]{velichkov2019regularity}). 
This result can be iterated $m$-times, as long as $\epsilon_k(2(1-c))^m\leq \overline{\epsilon}$, yields a uniform H\"older modulus of continuity. 
Therefore, by the Arzel\'a-Ascoli theorem, a subsequence of $\tilde{u}_k$ converges uniformly in $B_{\frac{1}{2}}$ to a H\"older continuous function $u^*$.
Also, up to a subsequence, we may also assume that
$$ a_k^+ \to \overline{a}^+ \in \left[L_0,L_1 \right], $$
and hence
$$ a_k^- \to \overline{a}^-=\left( (\overline{a}^+)^p+\lambda_-^p-\lambda_+^p \right)^{\frac{1}{p}}. $$
Notice that $0 \in \Gamma_{\mathrm{TP}}(u_k)$ implies that $\tilde{u}_k(0)=0$; hence $u^*(0)=0$.

\medskip

{\bf Step 2. Limiting solution.} We now show that $u^*$ solves
\begin{equation}
\label{Ext42}
\begin{cases}
\mathcal{L}_p (u) = 0, & \qquad \text{in} \quad  B_{\frac{1}{2}} \cap \{x_n \neq 0\}, \\
a\partial_n^+ u-b\partial_n^- u=0, & \qquad \text{on} \quad B_{\frac{1}{2}} \cap \{x_n =0\},
\end{cases}
\end{equation}
where $a=\left(\overline{a}^+\right)^p$, $b=\left(\overline{a}^- \right)^p $, and
\begin{equation}
\label{Linearizedop}
\mathcal{L}_p (u) := \Delta u +(p-2)\partial_{nn} u.
\end{equation}

We first verify that $\mathcal{L}_p(u^*)=0$. 
Let $P(x)$ be a quadratic polynomial touching $u^*$ strictly from below at a point $\overline{x} \in B_{\frac{1}{2}}\cap \{x_n>0\}$. 
We need to show that at this point
$$ \mathcal{L}_p(P)=\Delta P + (p-2) \partial_{nn} P \leq 0. $$
Since $\tilde{u}_k \to u^*$, there exist points $x_k \in \Omega^+_{u_k} \cap B_{\frac{1}{2}}$ with $x_k \to \overline{x}$, and constants $c_k \to 0$ such that
\begin{equation}
\label{EEE1}
\tilde{u}_k(x_k)=P(x_k)+c_k,
\end{equation}
and
\begin{equation}
\label{EEE2}
\tilde{u}_k \geq P+c_k, \qquad \text{in a neighborhood of $x_k$.} 
\end{equation}
Recalling the definition of $\tilde{u}_k$, conditions \eqref{EEE1} and \eqref{EEE2} read
$$ u_k(x_k)=Q_k(x_k), $$
and
$$ u_k(x) \geq Q_k(x), \qquad \text{in a neighborhood of $x_k$,} $$
where
$$ Q_k(x)=\epsilon_k a_k^+ (P(x)+c_k)+ a_k^+ P_{M_k,\ee_n}(x). $$
Note that
\begin{equation*}
\nabla Q_k (x) = \epsilon_k a_k^+ \nabla P (x) + a_k^+ \left( \textbf{e}_n + M_k x \right),
\end{equation*}
thus by recalling $\|M_k\|=O(\epsilon_k^{\frac{1}{2}})$,
\begin{equation*}
\nabla Q_k(x_k) \neq 0, \qquad \text{for $k$ large.}
\end{equation*}

Since $u_k$ is $p$-harmonic, and $Q_k$ touches $u_k$ from below at $x_k$, and $\nabla Q_k(x_k) \neq 0$, by the equivalence of weak and viscosity solutions of $p$-harmonic functions, we get
$$ 
\begin{aligned}
0 &\geq \Delta_p Q_k(x_k) \\
& = \mathrm{div}\left( |\nabla Q_k(x_k)|^{p-2} \nabla Q_k(x_k) \right) \\
& = \left| \nabla Q_k(x_k) \right|^{p-2} \left( \Delta Q_k(x_k) + (p-2) \left( \frac{\nabla Q_k(x_k)}{|\nabla Q_k(x_k)|} \right)^T D^2 Q_k(x_k) \left( \frac{\nabla Q_k(x_k)}{|\nabla Q_k(x_k)|} \right) \right) \\
& = \left| \nabla Q_k(x_k) \right|^{p-2} \Bigg( \epsilon_k a_k^+ \Delta P (x_k) + a_k^+ \mathrm{tr}(M_k) \\
& \qquad \qquad \qquad  \qquad + \frac{p-2}{|\nabla Q_k(x_k)|^2} \left(\nabla Q_k(x_k) \right)^T \left( \epsilon_k a_k^+ D^2 P (x_k) + a_k^+ M_k \right) \nabla Q_k(x_k) \Bigg) \\
& = a_k^+ \left| \nabla Q_k(x_k) \right|^{p-2} \left( \epsilon_k \Delta P (x_k) + \frac{p-2}{|\nabla Q_k(x_k)|^2} \left(\nabla Q_k(x_k) \right)^T \left( \epsilon_k D^2 P (x_k) + M_k \right) \nabla Q_k(x_k) \right) \\
& =  \epsilon_k a_k^+ \left| \nabla Q_k(x_k) \right|^{p-2} \left( \Delta P (x_k) + \frac{p-2}{|\nabla Q_k(x_k)|^2} \left(\nabla Q_k(x_k) \right)^T D^2 P (x_k) \nabla Q_k(x_k) + O(\epsilon_k^{\frac{1}{2}})\right). \\
\end{aligned}
$$
Now, dividing both sides by $\epsilon_k$, and passing to the limit $k \to \infty$, and recalling that 
$$ \nabla Q_k(x_k) \to \overline{a}^+ \textbf{e}_n, $$
we conclude that 
$$ \Delta P(\overline{x}) + (p-2) \partial_{nn} P(\overline{x}) \leq 0. $$ 
The opposite inequality follows by considering test polynomials touching from above.  
Also, the same reasoning can be applied to prove $\mathcal{L}_p(u^*)=0$ in $B_{\frac{1}{2}}\cap \{x_n<0\}$. 

Now, we verify the transmission condition on $\{x_n=0\}$ in the viscosity sense. 
Let
$$ w(x):=A+sx_n^+-tx_n^-+x^T N x+\xi' \cdot x', $$
with $A \in \mathbb{R}$ and $N$ a matrix such that $\mathcal{L}_p (w)>0$, and 
$$ as-bt>0. $$ 
We can always assume, via a change of coordinate in $\bR^{n-1}$, that $N$ satisfies \eqref{NMatrixCon}. 
Suppose that $w$ touches $u^*$ strictly from below at a point $x_0=(x_0',0) \in B_{\frac{1}{2}}$. 
Define
$$ \overline{a}_k^+=a_k^+(1+\epsilon_k s), \qquad  \overline{a}_k^-=a_k^-(1+\epsilon_k t), $$
$$ \overline{M}_k=M_k+2\epsilon_k N, \qquad Q_k:=P_{\overline{M}_k,\textbf{e}_n}+\epsilon_k \xi' \cdot x' + A \epsilon_k, $$
and
$$ v_k:=\overline{a}_k^+ Q_k^+-\overline{a}_k^- Q_k^-. $$
Let $\tilde{v}_k$ be the $\epsilon$-linearization of $v_k$ around the function $V_{a^{+},M_k,\textbf{e}_n}$; then, it 
converges uniformly to $w$ on $B_{\frac{1}{2}}$. 
Since $\tilde{u}_k$ converges uniformly to $u^*$, and $w$ touches $u^*$ strictly from below at $x_0$, we conclude that for a sequence of constants $c_k \to 0$, and points $x_k \to x_0$, the function
$$ w_k(x):=v_k(x+\epsilon_k c_k \textbf{e}_n), $$
touches $u_k$ from below at $x_k$. 
Thus, Lemma \ref{Ext33} gives that $w_k$ is a strict subsolution of our free boundary problem provided that we first choose $\delta_k$ small enough to ensure that \eqref{Ext34} holds.
We reach a contradiction as we let $k \to \infty$; thus $u^*$ is a solution to the linearized problem \eqref{Ext42}. 

\medskip

{\bf Step 3. Contradiction.} 
Since $\tilde{u}_k$ converges uniformly to $u^*$, and $u^*$ enjoys the quadratic estimate of \cite[Theorem 3.2]{MR3218810} or \cite[Lemma 4.12]{fotouhi-bayrami2023}, we have 
\begin{equation}
\label{Ext43}
|\tilde{u}_k-(x' \cdot \nu'+ \tilde{s}x_n^+-\tilde{t}x_n^-)|\leq C r^{2}, \qquad \text{for} \quad x \in B_r, 
\end{equation}
where
$$ a \tilde{s}-b \tilde{t}=0, \qquad |\nu'|\leq C. $$
Define
$$ \overline{a}_k^+=a_k^+ \left(1+\epsilon_k \tilde{s} \,\right), \qquad \overline{a}_k^-=\left( (\overline{a}^+_k)^p+\lambda_-^p-\lambda_+^p \right)^{\frac{1}{p}}=a_k^- \left(1+\epsilon_k \tilde{t}\,\right)+o(\epsilon_k), $$
and
$$ \nu_k=\frac{\textbf{e}_n+\epsilon_k(\nu',0)}{\sqrt{1+\epsilon_k^2|\nu'|^2}}=\textbf{e}_n+\epsilon_k(\nu',0)+\epsilon_k^2 \tau, \qquad |\tau| \leq C. $$
We claim that $M_k$ admits a decomposition
\begin{equation}
\label{Ext44}
M_k=\overline{M}_k+D_k,
\end{equation}
as
$$ \overline{M}_k \nu_k=0, \qquad \mathrm{tr}(\overline{M}_k)=0, \qquad \|D_k\|\leq C\|M_k\| \, |\nu_k-\textbf{e}_n|; $$
hence $\|D_k\|=O(\epsilon_k^{\frac{3}{2}})$.

To prove the claim, we first decompose $M_k$ as  
$$ M_k =N_k+L_k, $$
such that $N_k \nu_k=0$, and
$$ \|L_k\| \leq C \|M_k \| \, |\nu_k-\textbf{e}_n|. $$
Since $\mathrm{tr}(M_k)=0$, we obtain 
$$ \mathrm{tr}(N_k)=O(\|L_k\|). $$
Then,  we can decompose $N_k$ further (here $\nu_k^{\perp}$ is a unit vector perpendicular to $\nu_k$):
$$ N_k=\overline{M}_k+z_k \left(\nu_k^{\perp} \otimes \nu_k^{\perp} \right), \qquad z_k=\mathrm{tr}(N_k), \qquad \overline{M}_k \nu_k=0, $$ 
so that $ \mathrm{tr}(\overline{M}_k)=0$,
and the claim \eqref{Ext44} is proved.

We now show that, for a universal radius $r=r_1$,
$$ |u_k - V_{\overline{a}_k^{+},\overline{M}_k,\nu_k}| \leq \epsilon_k r^{1+\alpha_1}, $$
say with $\alpha_1=\frac12$. 
This contradicts the fact that $u_k$ does not satisfy \eqref{Ext12}. 
Using \eqref{Ext43} and the definition of $\tilde{u}_k$, we get that in $\overline{\Omega_{u_k}^+ \cap B_r}$ (we can argue similarly in the negative part)
$$ |u_k-a_k^+P_{M,\textbf{e}_n}-\epsilon_ka_k^+ (x' \cdot \nu' + \tilde{s} x_n^+ - \tilde{t} x_n^-) | \leq 2 \epsilon_k C r^{2}. $$
Since $x_n \geq -3 \epsilon_k^{\frac{1}{2}}$ in this set, for $k$ sufficiently large,
$$ |u_k-\overline{a}_k^+P_{M,\textbf{e}_n}-\epsilon_ka_k^+ (x' \cdot \nu' + \tilde{s} x_n) | \leq 3 \epsilon_k C r^{2}, $$
which gives
$$  \left|u_k-\overline{a}_k^+ \left( x \cdot \nu_k + \frac{1}{2} x^T Mx \right) \right| \leq 4 \epsilon_k C r^{2}. $$
Finally, from \eqref{Ext44} we infer that
$$ |u_k - V_{\overline{a}_k^{+},\overline{M}_k,\nu_k}| \leq 5 C \epsilon_k r^{2}, $$
from which the desired bound follows for $r_1$ small enough.
\end{proof}


\section{Nonlinear dichotomy}
\label{Nonlinear dichotomy}

In this section, we establish the following dichotomy result for solutions of \eqref{OVERDETERMINED}. This proposition plays a key role in the proof of Theorem \ref{Ext02}.

\begin{proposition}[Nonlinear dichotomy]
\label{Ext07}
Let $u$ be a solution to \eqref{OVERDETERMINED} with $|\nabla u| \leq 1$ and assume that $0 \in \Gamma_{\mathrm{TP}}(u)$. 
Then, there exist universal positive constants $\epsilon_0$, $\delta_0$, $r_0$, $c_0$, $\alpha_0$ such that if
\begin{equation*}
|u(x)-P_{M,\nu}(x)| \leq \epsilon, \qquad \text{for} \quad x\in B_1,
\end{equation*}
with $\epsilon\leq \epsilon_0$, and
$$ \|M\| \leq \delta_0 \epsilon^{\frac{1}{2}}, $$
then one of the following two alternatives holds:
\begin{enumerate}[label=(\roman*)]
\item
$|\nabla u^{+}(0)|\leq 1-c_0 \epsilon$, or

\item
$|u-P_{\overline{M},\overline{\nu}}|\leq \epsilon r_0^{2+\alpha_0}$ in $B_{r_0}$ for some $\overline{M}$, $\overline{\nu}$ with $\|M-\overline{M}\|\leq C\epsilon$ and $C$ universal.

\end{enumerate}
\end{proposition}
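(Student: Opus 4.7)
The plan is a compactness/contradiction argument closely modeled on the proof of Proposition \ref{Ext09}. Assume for contradiction that no universal choice of $c_0, r_0, \alpha_0, \epsilon_0, \delta_0$ works. Then there exist sequences of solutions $u_k$ to \eqref{OVERDETERMINED} on $B_1$ with $|\nabla u_k|\le 1$ and $0\in\Gamma^{\mathrm{int}}_{\mathrm{TP}}(u_k)$, together with scalars $\epsilon_k\to 0$, $\delta_k\to 0$, symmetric matrices $M_k$ satisfying $M_k\ee_n=0$, $\mathrm{tr}(M_k)=0$, $\|M_k\|\le \delta_k\epsilon_k^{1/2}$, and directions we may rotate to $\nu_k=\ee_n$, such that $|u_k-P_{M_k,\ee_n}|\le \epsilon_k$ in $B_1$ but neither (i) nor (ii) holds.

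First I would pass to the linearization $\tilde u_k:=(u_k-P_{M_k,\ee_n})/\epsilon_k$, which is bounded by $1$ in $B_1$. Iterating the Harnack-type inequality of Lemma \ref{Ext35}, exactly as in Step~1 of the proof of Proposition \ref{Ext09}, yields uniform H\"older bounds on $\tilde u_k$, and a subsequence converges uniformly on $B_{1/2}$ to a continuous limit $u^*$.

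Next I would identify the limiting problem for $u^*$. Following the touching/expansion reasoning of Step~2 of the proof of Proposition \ref{Ext09}, and using that $\|M_k\|=O(\epsilon_k^{1/2})$ absorbs the quadratic corrections from $\Delta_p$, the $p$-harmonicity of $u_k$ on each phase passes to $\mathcal{L}_p u^*=0$ in $B_{1/2}\cap\{\pm x_n>0\}$. Linearizing the transmission $|\nabla u_k^+|^p-|\nabla u_k^-|^p=\lambda_+^p-\lambda_-^p$ around the reference gradient $\ee_n+M_k x$ yields an inhomogeneous jump condition on $\{x_n=0\}$ of the form $\partial_n^+ u^*-\partial_n^- u^*=\gamma$, where $\gamma$ encodes the defect caused by the fact that $P_{M,\nu}$ is not itself a two-plane solution to \eqref{OVERDETERMINED}. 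The classification of Appendix \ref{appendix2} then forces, near $0$, a linear/quadratic expansion
\[
u^*(x)=A+\xi'\cdot x'+\tilde s\,x_n^+-\tilde t\,x_n^-+\tfrac12 x^T N x+o(|x|^2),
\]
with $\tilde s-\tilde t=\gamma$, universally bounded coefficients, and $N$ satisfying \eqref{NMatrixCon}. From this expansion, following Step~3 of the proof of Proposition \ref{Ext09}, I would build refined parameters $\overline M_k,\overline\nu_k$ with $\|M_k-\overline M_k\|,|\nu_k-\overline\nu_k|=O(\epsilon_k)$ and verify $|u_k-P_{\overline M_k,\overline\nu_k}|\le \epsilon_k r_0^{2+\alpha_0}$ in $B_{r_0}$ for universal small $r_0$ and, say, $\alpha_0=\tfrac12$, contradicting the failure of (ii). The only obstruction is when the positive-side slope $\tilde s$ forced by the limiting transmission is incompatible with $\overline a_k^+\le 1$ under the constraint $(\overline a_k^+)^p-(\overline a_k^-)^p=\lambda_+^p-\lambda_-^p$; in that case one shows $|\nabla u_k^+(0)|\le 1-c_0\epsilon_k$, i.e.\ alternative (i) holds, again a contradiction.

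The main obstacle, and the reason the dichotomy is ``nonlinear'', is the bookkeeping of the inhomogeneous term $\gamma$ in the limiting transmission condition. Unlike the setting of Proposition \ref{Ext09}, where the reference profile $V_{a^+,M,\nu}$ exactly satisfies the free-boundary condition and the linearized problem is homogeneous, here $P_{M,\nu}$ is only an approximate solution and the limiting operator carries a source term that must be balanced either by an adjustment in the positive-side slope (case (ii)) or by a definite drop of that slope (case (i)). Controlling $\gamma$ uniformly in $k$ and reading off the correct alternative from the sign and size of $\tilde s$ is the delicate step.
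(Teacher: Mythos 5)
Your compactness skeleton in Step 1 matches the paper, but the later steps contain two genuine errors that break the argument.

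First, you claim the limiting transmission condition is \emph{inhomogeneous}, of the form $\partial_n^+ u^* - \partial_n^- u^* = \gamma$, with $\gamma$ ``encoding the defect caused by the fact that $P_{M,\nu}$ is not itself a two-plane solution.'' This is not correct. In the setting of Proposition~\ref{Ext07} (where slopes are normalized to one, so $\lambda_+=\lambda_-$ and hence $a^+=a^-=1$), the profile $P_{M,\nu}$ \emph{is} $V_{1,M,\nu}$, which does satisfy the free-boundary balance $|\nabla V^+|^p - |\nabla V^-|^p = 0 = \lambda_+^p-\lambda_-^p$ up to terms of order $\|M\|^2 = O(\delta^2\epsilon)$, which vanish in the limit. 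The limiting transmission condition is therefore the \emph{homogeneous} one, $\partial_n^+ u^* - b\,\partial_n^- u^* = 0$, exactly as in the proof of Proposition~\ref{Ext09}. Your proposed mechanism — balancing a source $\gamma$ against a slope adjustment — is built on a nonexistent term, and the final sign analysis (``when $\tilde s$ is incompatible with $\overline{a}_k^+ \le 1$'') does not survive once $\gamma=0$.

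Second, and more importantly, you never derive the monotonicity $\partial_n u^* \le 0$, which is the crucial new information extracted from the hypothesis $|\nabla u|\le 1$ and the identity $M_k\ee_n=0$. The paper obtains it by expanding $|\nabla u_k|^2 \le 1$ as $1 \ge 1 + |M_k\bar x|^2 + 2\epsilon_k\,\partial_n\tilde u_k(\bar x) + O(\epsilon_k^{3/2})$ and passing to the limit. Without this, one cannot invoke the Liouville classification of Appendix~\ref{appendix2}, whose hypotheses explicitly include $\partial_n U\le 0$. You invoke that classification directly on $u^*$, but that is not how the paper uses it: the classification is applied inside the proof of the \emph{Linear dichotomy} (Proposition~\ref{LinearDi}), which is itself a separate compactness argument that produces the dichotomy ``either $\partial_n^+ u^*(0)\le -c_0$ or $u^*$ is $\tfrac14 r_0^2$-close to a quadratic polynomial in $x'$.'' It is this linear dichotomy, applied to the monotone, bounded limit $u^*$, that feeds the nonlinear dichotomy (i)/(ii). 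Your proposal skips the monotonicity entirely, skips the Linear dichotomy lemma entirely, and hence has no mechanism for deciding between the two alternatives. These are the new ingredients relative to Proposition~\ref{Ext09}, and they cannot be elided.
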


\begin{remark}\label{rmk-iterate}
If alternative $(ii)$ of Proposition \ref{Ext07} holds, then the conclusion can be iterated. Indeed, the rescaling
$$ u_r(x):=\frac{u(rx)}{r}, \qquad r=r_0, $$
satisfies
$$ |u-P_{r\overline{M},\overline{\nu}}| \leq \epsilon_r:=\epsilon r^{1+\alpha_0}, \qquad \text{in} \quad B_1, $$
with
$$ \|r \overline{M} \| \leq \delta_0 \epsilon_r^{\frac{1}{2}}, \qquad \epsilon_r \leq \epsilon \leq \epsilon_0, $$
provided that $\epsilon_0$ is chosen small enough; see Remark \ref{LLREM1}.
\end{remark}

The proof of Nonlinear dichotomy, i.e. Proposition \ref{Ext07}, is very similar to that of Proposition \ref{Ext09}. 
However, before that, we need the following linear counterpart of the nonlinear dichotomy.

\begin{proposition}[Linear dichotomy]
\label{LinearDi}
Let $v$ solves
\begin{equation}
\label{Di-01}
\begin{cases}
\mathcal{L}_p (v) = 0, & \qquad \text{in} \quad B_{1} \cap \{x_n \neq 0\}, \\
\partial_n^+ v-b\partial_n^- v=0, & \qquad \text{on} \quad B_{1} \cap \{x_n =0\},
\end{cases}
\end{equation}
with $0<b_0 \leq b \leq b_1$ and $\|v\|_{L^{\infty}} \leq 1$. Moreover, suppose that 
\begin{equation*}
\partial_n v \leq 0.
\end{equation*}
Then, there exist universal constants $c_0, r_0 > 0$ such that either
\begin{enumerate}[label=(\roman*)]
\item $\partial_n^+ v(0) \leq - c_0$, or
\item $|v(x)-Q(x')| \leq \frac{1}{4} r_0^2$ in $B_{r_0}$,
\end{enumerate}
depending only on $x'$ and satisfying with 
\begin{equation*}
\mathcal{L}_p (Q) = \Delta Q = 0, \qquad \|Q\| \leq C,
\end{equation*}
for a universal constant $C$.
\end{proposition}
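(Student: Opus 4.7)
The plan is to run a compactness-and-rigidity argument in the spirit of De Silva--Savin. I argue by contradiction. Fix a small universal $r_0>0$ to be chosen at the end. If the statement fails, then for every integer $k$ there is a solution $v_k$ of \eqref{Di-01} with coefficient $b_k\in[b_0,b_1]$, satisfying $\|v_k\|_{\infty}\le 1$ and $\partial_n v_k\le 0$, such that $\partial_n^+ v_k(0)>-\tfrac{1}{k}$ and such that no quadratic polynomial $Q(x')$ with $\Delta Q=0$ and $\|Q\|\le C$ (a universal constant identified below) approximates $v_k$ to within $\tfrac{1}{4}r_0^{2}$ in $B_{r_0}$.

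For the compactness step, the operator $\mathcal{L}_p=\Delta+(p-2)\partial_{nn}$ is linear with constant coefficients and uniformly elliptic on each side of $\{x_n=0\}$. Classical interior Schauder theory combined with constant-coefficient transmission estimates yields uniform $C^{2,\alpha}$ bounds for $v_k$ on each closed half-ball inside $B_{3/4}$. Along a subsequence, $b_k\to b_\infty\in[b_0,b_1]$ and $v_k\to v_\infty$ in $C^{2}_{\mathrm{loc}}$ separately on $\overline{B_{3/4}^{+}}$ and $\overline{B_{3/4}^{-}}$. The limit $v_\infty$ then solves \eqref{Di-01} with coefficient $b_\infty$, satisfies $\partial_n v_\infty\le 0$, and enjoys the critical normalisation $\partial_n^+ v_\infty(0)=0$ (hence, by the transmission condition, $\partial_n^- v_\infty(0)=0$ as well).

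The crux is the rigidity: I want to conclude $v_\infty=v_\infty(x')$. Set $w:=\partial_n v_\infty$, so $\mathcal{L}_p w=0$ in each half-ball and $w\le 0$. From $\mathcal{L}_p v_\infty=0$ one obtains the pointwise identity $(p-1)\partial_{nn}v_\infty=-\Delta_{x'}v_\infty$; since the tangential second derivatives of $v_\infty$ are automatically continuous across $\{x_n=0\}$, this identity forces $\partial_{nn}v_\infty$ itself to be continuous across the interface. I would then apply the strong maximum principle and Hopf's lemma in each half: if $w\not\equiv 0$ on $B_1^{+}$, Hopf at the flat boundary point $0\in\partial B_1^{+}$ gives $\partial_{nn}^{+}v_\infty(0)<0$; symmetrically, if $w\not\equiv 0$ on $B_1^{-}$, Hopf yields $\partial_{nn}^{-}v_\infty(0)>0$. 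The continuity of $\partial_{nn}v_\infty$ across $\{x_n=0\}$ makes these two alternatives incompatible, and the mixed cases (say $w\equiv 0$ on one side and not on the other) are ruled out identically. Consequently $w\equiv 0$ in $B_1$ and $v_\infty=v_\infty(x')$, reducing the equation to $\Delta_{x'}v_\infty=0$.

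Finally, standard interior Taylor estimates for the bounded harmonic function $v_\infty(x')$ produce a quadratic polynomial $Q(x')$ with $\Delta Q=0$, $\|Q\|\le C$, and $|v_\infty-Q|\le C_1 r^{3}$ on $B_r$. Choosing $r_0$ universal with $C_1 r_0\le \tfrac{1}{8}$ gives $|v_\infty-Q|\le \tfrac{1}{8}r_0^{2}$ on $B_{r_0}$, and the uniform convergence $v_k\to v_\infty$ on $B_{1/2}$ then upgrades this to $\|v_k-Q\|_{L^\infty(B_{r_0})}\le \tfrac{1}{4}r_0^{2}$ for all sufficiently large $k$, contradicting the choice of $v_k$. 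The step I expect to be the main obstacle is the rigidity: both the Hopf argument on $w=\partial_n v_\infty$ (which needs $w$ to be a classical solution up to the interface on each side) and the subsequent cancellation across $\{x_n=0\}$ (which relies on the continuity of $\partial_{nn}v_\infty$) exploit the constant-coefficient nature of $\mathcal{L}_p$, so the uniform $C^{2,\alpha}$ regularity for this transmission problem seems worth recording as a short preliminary lemma before entering the main compactness argument.
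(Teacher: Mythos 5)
Your proof is correct and takes a genuinely different, considerably shorter route than the paper's. After passing to the compactness limit, the paper proves tangential semiconcavity (Step 1) and quadratic decay $\|\overline v\|_{L^\infty(B_r)}\le Cr^2$ via Harnack chains and an explicit barrier (Step 2), and then classifies blow-downs by the global Liouville theorem, Lemma~\ref{C-G-S}, of Appendix~\ref{appendix2} (Step 3). You replace all of this with a two-sided Hopf lemma applied to $w=\partial_n v_\infty$ at the origin: the identity $\Delta_{x'}v_\infty+(p-1)\partial_{nn}v_\infty=0$, together with the continuity of the tangential Hessian across $\{x_n=0\}$ (both one-sided traces restrict to the same boundary function $v_\infty(\cdot,0)$), forces $\partial_{nn}v_\infty$ to be continuous across the interface, and this is incompatible with the strict sign conditions Hopf produces from each half-ball unless $\partial_n v_\infty\equiv0$. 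This bypasses the semiconcavity and quadratic-growth steps and makes the appendix Liouville theorem unnecessary for this proposition. The one preliminary to record, as you correctly flag, is uniform $C^{2,\alpha}$ regularity up to the flat interface from each side for this constant-coefficient transmission problem (uniform in $b\in[b_0,b_1]$ so the $C^2$ convergence in the compactness step is legitimate). After the substitution $x_n\mapsto\sqrt{p-1}\,x_n$, which reduces $\mathcal L_p$ to $\Delta$ and preserves both the monotonicity $\partial_n v\le 0$ and the transmission condition, this regularity is immediate from Schwarz reflection: $v^+(x',x_n)-v^-(x',-x_n)$ vanishes on $\{x_n=0\}$ and extends harmonically by odd reflection, while $v^+(x',x_n)+b\,v^-(x',-x_n)$ has zero Neumann data by the transmission condition and extends by even reflection; both extensions are smooth across the hyperplane and $v^\pm$ are recovered as fixed linear combinations of them.
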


\begin{proof}
Let $r_0$ be given to be specified later. 
Assume by contradiction that there exist sequences  $c_k \to 0$, $b_k \in [b_0,b_1]$, and bounded monotone solutions $v_k$ of \eqref{Di-01} such that 
$$ \partial_n^+ v_k(0)>-c_k, $$
while alternative $(ii)$ fails for every $k$.
Passing to a subsequence, we may assume that $v_k$ converges uniformly on compacts and in the $C^{1,\alpha}$ norm from either side of $\{x_n \neq 0\}$ to a limiting solution $\overline{v}$ of
\begin{equation}
\label{Di-04}
\begin{cases}
\mathcal{L}_p (\overline{v}) = 0, & \qquad \text{in} \quad B_{\frac{3}{4}} \setminus \{x_n \neq 0\}, \\
\partial_n^+ \overline{v}-b\partial_n^- \overline{v} =0, & \qquad \text{on} \quad B_{\frac{3}{4}} \cap \{x_n =0\},
\end{cases}
\end{equation}
for some $b\in[b_0,b_1]$. Moreover, 
\begin{equation}
\label{Di-05}
\|\overline{v}\|_{L^{\infty}} \leq 1, \qquad \partial_n \overline{v} \leq 0, \qquad \partial_n^+ \overline{v}(0) = 0.
\end{equation}
Subtracting a suitable linear function depending only on $x'$, we may assume that
\begin{equation}
\label{Di-06}
\overline{v}(0)=0, \qquad \nabla \overline{v}(0) = 0.
\end{equation}
Indeed, by \eqref{Di-05} and the transmission condition in \eqref{Di-04}, we also have $\partial_n^-\overline v(0)=0$.

\medskip

{\bf Step 1.} We claim that
$$ D_{x'}^2 \overline{v} \leq C I_{x'}, \qquad \text{in} \quad B_{\frac{1}{4}}, $$
or, in other words that $\overline{v}$ is uniformly semiconcave in the $x'$-variable. 

It is enough to prove that, for every unit vector $\tau\perp \mathbf e_n$,
\begin{equation}
\label{Di-07}
\overline{v}_{\tau \tau}^h(x): =\frac{\overline{v}(x+h \tau)+\overline{v}(x-h \tau)-2\overline{v}(x)}{h^2} \leq C, \qquad \text{in} \quad B_{\frac{1}{4}},
\end{equation}
with a constant independent of $h>0$. 
We first establish the estimate
\begin{equation}
\label{Di-08}
\overline{v}_{\tau \tau}^h(x) \leq \frac{C}{x_n^2}, \qquad \text{in} \quad B_{\frac{1}{2}}.
\end{equation}
Indeed, by the interior $C^{2,\alpha}$ estimates applied in $B_{\frac{|x_n|}{2}}(x)$, we have
$$ | D^2 \overline{v}(x) | \leq \frac{C}{x_n^2} \| \overline{v} \|_{L^{\infty}} \leq \frac{C}{x_n^2}, \qquad \text{in} \quad B_{\frac{5}{8}} \setminus \{x_n=0\}. $$
Since we can write
$$ \overline{v}_{\tau \tau}^h(x_0)= \int_{-1}^{1} \overline{v}_{\tau \tau} (x_0+th \tau)(1-|t|) \, dt, $$
we immediately obtain \eqref{Di-08}.

Finally, note that $\overline v_{\tau\tau}^h$ satisfies the same transmission problem as $\overline v$. Combining \eqref{Di-08} with the weak $L^q$-Harnack inequality for small exponent $q>0$ (see for example, \cite[Theorem 4.8(2)]{MR1351007}), we deduce \eqref{Di-07}. 

\medskip

{\bf Step 2.} In this step, we wish to show that
\begin{equation}
\label{Di-09}
\| \overline{v} \|_{L^{\infty}(B_r)} \leq C r^2, \qquad r \leq \frac{1}{4},
\end{equation}
with $C$ universal. 

Let us consider
\begin{equation*}
\tilde{v}(x)=\frac{\overline{v}(rx)}{r^2}, \qquad x \in B_1.
\end{equation*}
Then, $\tilde{v}$ satisfies \eqref{Di-04} and \eqref{Di-05}. From \eqref{Di-06}, we also have
$$ \tilde{v}(0)=0, \qquad \nabla \tilde{v}(0)=0. $$
From Step 1, we deduce that
\begin{equation}
\label{Di-11}
\tilde{v}(x',0)\leq C|x'|^2, \qquad \text{in} \quad B_1 \cap \{x_n=0\}.
\end{equation}
Hence, by the monotonicity of $\tilde{v}$, we obtain
\begin{equation}
\label{Di-12}
\tilde{v}\leq C, \qquad \text{in} \quad B_1 \cap \{x_n>0\}.
\end{equation}
Next, we claim that 
\begin{equation}
\label{Di-13}
\tilde{v} \geq -LC, \qquad \text{in} \quad B_{\frac{1}{3}}\left(\frac{\textbf{e}_n}{2} \right),
\end{equation}
for some large constant $L$ to be specified later. 
Suppose by contradiction that this does not hold at some point $z\in B_{\frac{1}{3}}\left(\frac{\textbf{e}_n}{2}\right)$.
Let $z_1\in \{x_n=d\}\cap\{|x'|=\frac13\}$ be arbitrary, with $d$ sufficiently small.
We then construct a sequence of points $z_1, \cdots, z_k$ such that $z_k=z$ and
\[
|z_{i+1}-z_i| \le d2^{i-1}, \qquad i=1, \cdots, k-1.
\]
Note that $k$, i.e. the number of points, depends only on $d$ and is of order $|\log d|$.
Then, by virtue of  Harnack inequality for $C-\tilde{v} \geq 0 $ in $B_{d2^{i}}(z_i)$, there is a constant $c_0<1$ such that 
\[
C-\tilde{v}(z_{i}) \ge c_0 (C-\tilde{v}(z_{i+1})).
\]
This implies that
$$ C-\tilde{v} \geq (1+L)C c_0^k, \qquad \text{on} \quad x_n=d, \quad |x'| \leq \frac{1}{3}, $$
with $k$ depends only on $d$. 
In particular, if 
$$ c_0^k \geq \frac{2}{1+L}, $$
then
\begin{equation}
\label{Di-15}
\tilde{v} \leq -C, \qquad \text{on} \quad x_n=d, \quad |x'| \leq \frac{1}{3}.
\end{equation}
We now compare $\tilde{v}$ with the explicit barrier
$$ \phi(x):=10C|x'|^2-Ax_n^2-x_n, $$
where $A=A(C,p, n)$ is chosen so that $ \mathcal{L}_p ( \phi ) \leq 0 $.
Define
$$ R:= \left\{0<x_n<d, \quad |x'|<\frac{1}{3} \right\}. $$
We show that for $d$ small enough (and hence $L$ large enough), we have  
\begin{equation}
\label{Di-16}
\tilde{v} \leq \phi, \qquad \text{on} \quad \partial R.
\end{equation}
Thus, we conclude that the inequality holds in $R$ and we reach a contradiction because
$$ 0=\partial_n \tilde{v} (0) \leq \partial_n \phi(0)=-1. $$
Now, we verify \eqref{Di-16}. On $\{x_n=d\}$, this follows from \eqref{Di-15} if $d$ is chosen small depending on $C$ and $A$. Similarly, on $\{x_n=0\}$, the desired bound follows immediately from \eqref{Di-11}. Finally, in the set 
$$ \left\{0<x_n<d, \quad |x'|=\frac{1}{3} \right\}, $$
we use \eqref{Di-12} and again we obtain \eqref{Di-16} for $d$ sufficiently small. In conclusion, the claim \eqref{Di-13} holds.

Since $\tilde{v}$ is decreasing in the $\textbf{e}_n$-direction, we obtain 
$$ \tilde{v} \geq - C, \qquad \text{in} \quad B_{\frac{1}{3}}, $$
for some  $C$. Now, recalling that $\tilde{v}(0)=0$, and applying  Harnack inequality for $\tilde{v}+C$, we get 
$$ |\tilde{v}| \leq C, \qquad \text{in} \quad B_{\frac{1}{4}}, $$
which after rescaling gives the desired claim \eqref{Di-09}.

\medskip

{\bf Step 3.} We prove that if $\overline{v}$ solves \eqref{Di-04} and satisfies \eqref{Di-05}, \eqref{Di-06}, and \eqref{Di-09}, then there exists a universal $r_0$ such that alternative $(ii)$ holds for $\overline{v}$ with right-hand side $\frac{1}{8} r_0^2$  in place of  $\frac{1}{4} r_0^2$. 

This claim follows by compactness and relies on the classification of global solutions to $\mathcal{L}_p$, namely Lemma \ref{C-G-S}.
Assume by contradiction that there exists a sequence of $\delta_k \to 0$, and solutions $\overline{v}_k$ to a sequence of problems $\eqref{Di-04}_k$ (satisfying the same properties as $\overline{v}$) for which the alternative $(ii)$ fails in the ball of radius $\delta_k$. 
Denote the quadratic rescalings by
$$ w_k(x)=\frac{\overline{v}_k(\delta_kx)}{\delta_k^2}. $$
Then, up to extracting a subsequence, $w_k$ converges uniformly on compacts to a global solution $U$ to the limiting transmission problem
\begin{equation*}
\begin{cases}
\mathcal{L}_p (U) = 0, & \qquad \text{in} \quad \mathbb{R}^n \cap \{x_n \neq 0\}, \\
\partial_n^+ U=g \partial_n^- U, & \qquad \text{on} \quad \{x_n =0\},
\end{cases}
\end{equation*}
with $0<b_0 \leq g \leq b_1$. 
The convergence holds in the $C^{1,\alpha}$ norm up to ${x_n=0}$ from either side and in the $C^{2,\alpha}$ norm in the interior. 
Clearly the global solution $U$ also satisfies \eqref{Di-05}, \eqref{Di-06}, and \eqref{Di-09}. 

Lemma \ref{C-G-S} then yields $U=Q(x')$ where $Q$ is a quadratic polynomial depending only on $x'$ and satisfying $\mathcal{L}_p (Q) = \Delta Q =0$. 
Therefore, for $k$ large, $\overline{v}_k$ satisfies the alternative $(ii)$ in $B_{\delta_k}$.
The contradiction establishes Step 3. 

\medskip

By Step 3, $\overline{v}$ satisfies alternative $(ii)$ with right-hand side $\frac{1}{8}r_0^2$ for some quadratic polynomial $\overline{Q}(x')$ with $\mathcal{L}_p (\overline{Q}) = \Delta \overline{Q} =0$. 
As above, this means that the $v_k$'s satisfy the alternative $(ii)$ for all $k$ large.
\end{proof}

\begin{proof}[Proof of Proposition \normalfont{\ref{Ext07}}]
We divide the proof into two steps.

\medskip
{\bf Step 1. Compactness and the limiting solution.} Fix a universal constant $r_0$, to be chosen precise later. 
Toward a contradiction, assume  that there are  sequences $\epsilon_k \to 0$, $\delta_k \to 0$ and a sequence $u_k$ of solutions to \eqref{OVERDETERMINED} in $D=B_1$ such that
\begin{equation*}
|u_k-P_{M_k, \textbf{e}_n}|\leq \epsilon_k, \qquad \text{in} \quad B_1,  \qquad  \text{and} \qquad 0 \in \Gamma_{\mathrm{TP}}(u_k),
\end{equation*}
with 
$$ \|M_k\|\leq \delta_k \epsilon_k^{\frac{1}{2}}, \qquad M_k\textbf{e}_n=0,\qquad \mathrm{tr}(M_k)=0, $$
but $u_k$ does not satisfy either alternative $(i)$ (for some small constant $c_0'$ to be specified later) or alternative $(ii)$. 

Let $\tilde{u}_k:=\tilde{u}_{1, M_k, \textbf{e}_n, \epsilon_k}$ be defined as in \eqref{Ext31}. 
By Steps 1 and 2 in the proof of Proposition \ref{Ext09}, after passing to a subsequence, $\tilde u_k$ converges uniformly in $B_{\frac12}$ to a H\"older continuous function $u^*$ with $u^*(0)=0$, $\|u^*\|_{L^{\infty}} \leq C$, and $u^*$ solves the transmission problem
\begin{equation*}
\begin{cases}
\mathcal{L}_p (u^*) = 0, & \qquad \text{in} \quad B_{\frac{1}{2}} \cap \{x_n \neq 0\}, \\
\partial_n^+ {u^*}-b\partial_n^- {u^*}=0, & \qquad \text{on} \quad B_{\frac{1}{2}} \cap \{x_n =0\},
\end{cases}
\end{equation*}
with $b>0$ bounded by a universal constant. (Indeed, in Proposition \ref{Ext09} we can choose $L_0=\frac{1}{2}$ and $L_1=2$ due to the condition $|\nabla u|\le 1$.)

Next, we claim that $u^*$ is monotone decreasing in the $\textbf{e}_n$-direction, namely
\begin{equation}
\label{Ext47}
\partial_n {u^*} \leq 0.
\end{equation}
To prove this, recall that $\tilde u_k$ satisfies
\begin{equation}
\label{Ext47.1}
 \sum_{i,j=1}^n \theta_{ij}\partial_{x_ix_j} \tilde{u}_k +\sum_{i=1}^n \beta_i \partial_{x_i}\tilde{u}_k =\frac{2-p}{\epsilon_k|\nabla u_k|^2}x^T M_k^2\nabla u_k, \qquad \text{in} \quad \Omega_{u_k}^+\cap\{|\nabla u_k|\ne0\}. 
 \end{equation}
where $\theta_{ij}=\delta_{ij} + (p-2)|\nabla u_k|^{-2} \partial_{x_i}u_k \partial_{x_j}u_k $ and
$\beta_i= (p-2)|\nabla u_k|^{-2} \sum_{j=1}^n {M_{k}}_{_{ij}}\partial_{x_j}u_k$.

Let $\bar x\in\{x_n>0\}$. 
Since $\Gamma_{\mathrm{TP}}(u_k) \subset \{|x_n| \leq \epsilon_k^{\frac{1}{2}} \}$, for sufficiently large $k$ we have
$$ B:=B_{\frac{\overline{x}_n}{2}}(\overline{x}) \subset \Omega_{u_k}^+. $$
We first show that
\begin{equation}
\label{Ext47.2}
 |\nabla \tilde{u}_k | \leq C, \qquad \text{in} \quad B, 
  \end{equation}
where constant $C$ here depends only on $x_n$.
Inside $\{|\nabla u_k|\ge \frac14\}\cap B$, we get $|\nabla \tilde{u}_k | \le C$ from  \eqref{Ext47.1} for a universal constant $C$.
Thus, 
\[
|\nabla u_k| \ge 1- \delta_k\epsilon_k^{\frac12}-C\epsilon_k\ge \frac12,
\] 
for sufficiently large $k$. 
This along with the continuity of $\nabla u_k$ inside $\Omega_{u_k}^+$ necessitates that $|\nabla u_k| \ge \frac12$ for sufficiently large $k$. This proves \eqref{Ext47.2}

Hence, 
$$ \nabla u_k(\overline{x})= \nabla ( P_{M_k,\textbf{e}_n}+\epsilon_k \tilde{u}_k ) (\overline{x}) = \textbf{e}_n + M_k\overline{x} + \epsilon_k \nabla \tilde{u}_k (\overline{x}). $$
Using that $|\nabla u_k(\overline{x})|^2 \leq 1$ and $M_k \textbf{e}_n=0$, we get
$$ 1 \geq 1 + |M_k\overline{x}|^2+2 \epsilon_k \partial_n \tilde{u}_k (\overline{x}) + O(\epsilon_k^{\frac{3}{2}}), $$
where the constant in $O(\epsilon_k^{\frac{3}{2}})$ depends on $\overline{x}_n$. 
By $C^{1,\alpha}$ estimates of \eqref{Ext47.1} in the ball $B$,  $\tilde{u}_k$ converges to $u^*$ in $C^1$.
 Passing to the limit, as $k \to \infty$, we get that
$$ \partial_n u^*(\overline{x}) \leq 0. $$
By continuity, since $u^*$ is $C^{1,\alpha}$ up to $\{x_n=0\}$, we conclude that
$$ \partial_n^+ {u^*} \leq 0, \qquad \text{on} \quad B_{\frac12} \cap \{x_n = 0 \}. $$
We can argue similarly to show that  $\partial_n {u^*}\le0$ in $B_{\frac12} \cap \{x_n \le 0 \}$.

\medskip
{\bf Step 2. Contradiction.} 
According to Proposition \ref{LinearDi} (Linear dichotomy), since \eqref{Ext47} holds and $\|u^*\|_{L^{\infty}} \le 1$, there exist universal constants $c_0>0$ and $r_0 > 0$ such that either of the following conditions is satisfied:
\begin{enumerate}[label=$(\roman*)$]
\item 
$\partial_n^+ u^* (0) \leq - c_0$, or
\item
$ |u^*(x)-Q(x')| \leq \frac{1}{4}r_0^2 $ in $B_{r_0}$, where $Q(x')$ is a quadratic polynomial in the $x'$-direction with $\|Q\|\leq C$ universal and $\mathcal{L}_p (Q) = \Delta Q =0$. 
\end{enumerate}

If $(i)$ above is satisfied, by the quadratic estimate for $u^*$ (see \cite[Theorem 3.2]{MR3218810} or \cite[Lemma 4.12]{fotouhi-bayrami2023}), we obtain that
$$ | u^*(x)- (\xi' \cdot x' + s x_n^+-tx_n^-) | \leq C r^{2}, \qquad \text{for} \quad x\in B_r, $$
with $|s|, |t|, |\xi'| \leq C$, and
$$ s-bt=0, \qquad s \leq -c_0. $$
Using the convergence of $\tilde{u}_k$ to $u^*$ and the definition of $\tilde{u}_k$, we immediately get that for $k$ large,
$$ | u_k(x) - \left( P_{M_k,\textbf{e}_n}(x)+\epsilon_k \left( \xi' \cdot x' + s x_n^+- t x_n^- \right) \right) | \leq C \epsilon_k r^{2}, \qquad \text{for} \quad x\in B_r. $$
Arguing as in Step 3 of the proof of Proposition \ref{Ext09}, we conclude that for $r \leq r_1$,
$$ |u_k - V_{\overline{a}_k^{+},\overline{M}_k,\nu_k}| \leq \epsilon_k r^{1+\alpha_1}, \qquad \text{with} \quad \overline{a}^+ = 1+ \epsilon_k s. $$
Thus, after rescaling (see Remark \ref{LLREM1}), Corollary \ref{Ext13} yields
$$ | \nabla u_k^+(0) | \leq 1+ s \epsilon_k + C \epsilon_k r^{1+\alpha_1} \leq 1- \frac{1}{2} c_0 \epsilon_k, $$
provided that $r$ is chosen small, depending only on universal constants. 
Thus, $u_k$ satisfies alternative $(i)$ in Proposition \ref{Ext07} for $c_0'=\frac{c_0}2$ and we arrive at a contradiction. 

\medskip

Assume now that condition $(ii)$ above holds. 
Since $u^*(0)=0$, we may write
$$ Q(x')=\xi' \cdot x' + \frac{1}{2} x^T N x, \qquad N \textbf{e}_n=0, \qquad \mathrm{tr}(N)=0. $$
Now using the convergence of the $\tilde{u}_k$ to $u^*$ and the definition of $\tilde{u}_k$, we get
\begin{equation}
\label{Ext49}
\left|u_k(x)- \left( x_n + \epsilon_k \xi' \cdot x' + \frac{1}{2} x^T N_k^* x \right) \right| \leq \frac{1}{4} \epsilon_k r_0^2,
\end{equation}
with 
$$ N_k^*=M_k+\epsilon_k N, \qquad N_k^* \textbf{e}_n=0, \qquad \mathrm{tr}(N_k^*)=0, $$
where we have used
$\mathrm{tr}(M_k)=0$ and $\mathrm{tr}(N)=0$.
Finally, as before, denote
$$ \nu_k= \frac{\textbf{e}_n+\epsilon_k(\xi',0)}{\sqrt{1+\epsilon_k^2 |\xi'|^2}}=\textbf{e}_n+\epsilon_k(\xi',0)+\epsilon_k^2 \tau, \qquad |\tau| \leq C. $$
We argue as in Step 3 of Proposition \ref{Ext09}, and decompose
$$ N_k^*=\overline{M}_k+D_k, \qquad \overline{M}_k \nu_k=0, \qquad \|D_k\|=o(\epsilon_k), $$
with $ \mathrm{tr}(\overline{M}_k)=0 $.
Thus, \eqref{Ext49} yields for $k$ large
$$ |u_k-P_{\overline{M}_k,\nu_k}| \leq \frac{1}{2} \epsilon_k r_0^2 \leq \epsilon_k r_0^{2+\alpha_0}, $$
for $\alpha_0$ small enough, and again we have reached a contradiction.
\end{proof}


\section{Classification of Lipschitz Global Solutions}
\label{Main-theorems}

The objective of this section is to prove Theorem \ref{Ext02}, a Liouville type theorem for the Lipschitz global viscosity solutions of the two-phase Bernoulli problem \eqref{OVERDETERMINED}.
As a first step, we establish the following lemma showing that, at some (possibly small) scale, the solution can be approximated by a two-plane solution whose slopes attain the maximal values allowed by the positive and negative phases.
Combined with Proposition \ref{Ext07}, this yields a reverse improvement of flatness argument, from which we deduce that the same two-plane solution approximates $u$ at all sufficiently large scales.

Let $u$ be a Lipschitz global viscosity solution to \eqref{OVERDETERMINED} in $D=\mathbb{R}^n$. Define
\begin{equation}
\label{Ext16}
\alpha:=\sup_{\Omega_u^+} |\nabla u|, \qquad \beta:=\sup_{\Omega_u^-} |\nabla u|.
\end{equation}
An initial flatness condition for $u$ can be obtained by the following lemma.

\begin{lemma}
\label{Ext17}
Let $u$, $\alpha$ and $\beta$ be as above.  
Then,
$$ \alpha^p-\beta^p=\lambda_+^p-\lambda_-^p  \qquad  \text{and} \qquad  \alpha\ge \lambda_+, \quad  \beta\ge\lambda_-. $$
Moreover, if $\Gamma^{\mathrm{int}}_{\mathrm{TP}}(u) \neq \emptyset$, then for every fixed $R>1$ there exists a sequence of rescalings $u_k(x):=\frac{u(x_k+d_kx)}{d_k}$ such that 
$$ u_k \to \alpha x_n^+-\beta x_n^-, $$
uniformly on $B_R$, as $k \to + \infty$.
\end{lemma}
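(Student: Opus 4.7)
The plan is to obtain all three conclusions (the lower bounds on $\alpha,\beta$, the Bernoulli identity between them, and the existence of the approximating two-plane sequence) through a blow-up construction centered at points where $|\nabla u|$ nearly attains its supremum in each phase. The lower bounds $\alpha \geq \lambda_+$ and $\beta \geq \lambda_-$ come quickly: at any $z \in \partial \Omega^+_u$, the viscosity free-boundary condition (part (B) of Definition \ref{DOCVWSN}) prohibits comparison from above by planar barriers of slope strictly less than $\lambda_+$, and combined with the non-degeneracy of Theorem \ref{P1.5} this forces $\limsup_{y \to z,\ y \in \Omega^+_u} |\nabla u(y)| \geq \lambda_+$; the analogous argument from the negative phase yields $\beta \geq \lambda_-$.

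For the identity and the sequence, I pick $y_k \in \Omega^+_u$ with $|\nabla u(y_k)| \to \alpha$, choose $z_k \in F(u)$ realizing $r_k := \dist(y_k, F(u))$, and define $v_k(x) := u(z_k + r_k x)/r_k$. Then $v_k(0) = 0$, $v_k$ is uniformly Lipschitz, and $\xi_k := (y_k - z_k)/r_k \in \mathbb{S}^{n-1}$. After rotation so $\xi_k \to \ee_n$ and extraction, $v_k \to v$ uniformly on compacts, where $v$ is a global Lipschitz viscosity solution of \eqref{OVERDETERMINED} satisfying $|\nabla v^+| \leq \alpha$, $|\nabla v^-| \leq \beta$, and $|\nabla v(\ee_n)| = \alpha$ at the interior point $\ee_n \in \Omega^+_v$ (by $C^{1,\alpha}$ convergence on $B_1(\ee_n) \subset \Omega^+_{v_k}$). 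A Bernstein-type strong maximum principle applied to $|\nabla v|^2$, which is a subsolution of a linear elliptic equation in $\Omega^+_v$, forces $v$ to be linear on the connected component of $\Omega^+_v$ containing $\ee_n$; combined with $v(0) = 0$ and the Lipschitz bound, this gives $\Omega^+_v = \{x_n > 0\}$ and $v^+(x) = \alpha x_n^+$.

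To identify $v^-$: either $v^- \equiv 0$ (in which case the one-phase condition on $\Gamma^+_{\mathrm{OP}}(v)$ forces $\alpha = \lambda_+$), or the transmission condition on $\{x_n=0\} \subset \Gamma_{\mathrm{TP}}(v)$ gives $|\nabla v^-| = \gamma$ on that interface with $\gamma^p = \alpha^p - \lambda_+^p + \lambda_-^p$, and a Liouville-type argument for bounded-Lipschitz $p$-harmonic functions in a half-space with prescribed constant normal derivative on the flat boundary yields $v^-(x) = \gamma x_n^-$ with $\gamma \leq \beta$. Running the symmetric construction starting from $y_k' \in \Omega^-_u$ with $|\nabla u(y_k')| \to \beta$ produces another limit $v'(x) = \alpha' x_n^+ - \beta x_n^-$ with $\alpha' \leq \alpha$ and $(\alpha')^p - \beta^p = \lambda_+^p - \lambda_-^p$. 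Chaining $\lambda_+^p - \lambda_-^p = \alpha^p - \gamma^p \geq \alpha^p - \beta^p$ (since $\gamma \leq \beta$) and $\lambda_+^p - \lambda_-^p = (\alpha')^p - \beta^p \leq \alpha^p - \beta^p$ (since $\alpha' \leq \alpha$) forces $\alpha^p - \beta^p = \lambda_+^p - \lambda_-^p$, $\gamma = \beta$, and $\alpha' = \alpha$; consequently $v(x) = \alpha x_n^+ - \beta x_n^-$, and setting $x_k := z_k$, $d_k := r_k$ yields the desired sequence converging uniformly on every $B_R$. The degenerate case $v^- \equiv 0$ (which then forces $\alpha = \lambda_+$ and eventually $\beta = \lambda_-$) is handled by instead taking blow-ups at any $x^* \in \Gamma^{\mathrm{int}}_{\mathrm{TP}}(u) \neq \emptyset$, whose subsequential limit is already a two-plane solution with these minimal slopes.

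The technical heart is the Bernstein-type step—upgrading the pointwise equality $|\nabla v(\ee_n)| = \alpha$ to linearity of $v$ on the whole connected component of $\Omega^+_v$—and the subsequent Liouville step in the half-space with prescribed constant normal derivative. The required subharmonicity of $|\nabla v|^2$ under the $p$-Laplacian linearization is classical for $p = 2$ but requires more care in the degenerate ($p > 2$) and singular ($1 < p < 2$) regimes, and the half-space Liouville step must likewise be justified carefully in this nonlinear setting.
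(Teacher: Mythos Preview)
Your overall architecture matches the paper's: blow up at a near-maximizer of $|\nabla u|$ in one phase, show the limit is linear there via a maximum-principle argument, extract inequalities from the viscosity free-boundary condition, and then run the symmetric construction from the other phase to close the identity; the degenerate case $\alpha=\lambda_+,\ \beta=\lambda_-$ is handled the same way, by blowing up at a point of $\Gamma^{\mathrm{int}}_{\mathrm{TP}}$.

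Two of your technical steps, however, diverge from the paper and the second contains a genuine gap. For the rigidity in $\Omega^+_v$, the paper does \emph{not} invoke a Bernstein-type subsolution property of $|\nabla v|^2$; it simply observes that the directional derivative $w:=\partial_n\bar u$ solves the linearized divergence-form equation $\sum_{i,j}\partial_i(a_{ij}\partial_j w)=0$ in the open set $\{\nabla\bar u\neq0\}$, with $a_{ij}=|\nabla\bar u|^{p-2}\big(\delta_{ij}+(p-2)\partial_i\bar u\,\partial_j\bar u/|\nabla\bar u|^2\big)$, and applies the strong maximum principle to $w$, which attains its interior maximum $\alpha$. This sidesteps entirely the degenerate/singular issues you flag.

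The real gap is in your identification of $v^-$. You claim the transmission condition yields the \emph{equality} $|\nabla v^-|=\gamma$ on $\{x_n=0\}$, but Definition~\ref{DOCVWSN} only produces inequalities for comparison functions that touch; a pointwise value of $|\nabla v^-|$ is not available before you know $v^-\in C^1$ up to the flat boundary. The half-space Liouville step you then invoke (``$p$-harmonic with globally bounded gradient and prescribed constant normal derivative on a hyperplane $\Rightarrow$ linear'') is likewise nonstandard and left unproved. The paper avoids both issues: since $|\nabla\bar u|\le\beta$ in $\Omega^-_{\bar u}$ and $\bar u=0$ on $\{x_n=-1\}$, the two-plane $H(x)=\alpha(x_n+1)^+-\beta(x_n+1)^-$ touches $\bar u$ from \emph{below} along the whole hyperplane. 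Applying (A.3) at a two-phase point immediately gives $\alpha^p-\beta^p\le\lambda_+^p-\lambda_-^p$ (and (A.1) at a one-phase point gives $\alpha\le\lambda_+$); the symmetric blow-up gives the reverse inequality, hence equality, and then Hopf's lemma---not a Liouville theorem---forces $\bar u\equiv H$. This is both simpler and complete.
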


\begin{proof}
Given $\epsilon>0$, choose a point $x_{\epsilon} \in \Omega_u^+=\{x \in \mathbb{R}^n \, : \, u(x)>0\}$ such that
$$ |\nabla u(x_{\epsilon})| \geq \alpha-\epsilon. $$
Let $d_\epsilon$ be the distance from $x_{\epsilon}$ to $\partial \Omega_u^+$ and consider the rescaling
$$ u_{\epsilon}(x):=\frac{u(x_{\epsilon}+d_\epsilon x)}{d_\epsilon}. $$
These rescalings still satisfy \eqref{OVERDETERMINED}, and $B_1 \subset \Omega_{u_{\epsilon}}^+$ is tangent to the free boundary $\partial \Omega_{u_{\epsilon}}^+=\partial \{x \in \mathbb{R}^n \, : \, u_{\epsilon}(x) >0\}$. 
After a suitable rotation, we may furthermore assume that
$$ \nabla u_{\epsilon}(0)=t_{\epsilon} \textbf{e}_n, \qquad \alpha \geq t_{\epsilon} \geq \alpha-\epsilon. $$
Since $u_{\epsilon}$ is uniformly Lipschitz, we may extract a subsequence such that
$$ u_{\epsilon} \to \overline{u}, $$ 
uniformly on compacts in $\mathbb{R}^n$, as $\epsilon$ goes to zero. In addition, the convergence is in $C^{1,\eta}(B_{\frac{1}{2}})$ due to $\Delta_p u_\epsilon=0$ in $B_1$.
Consequently, for every fixed $R>1$, the limit $\overline u$ is a solution of \eqref{OVERDETERMINED} in $B_R$ and satisfies
$$ \partial_n \overline{u}(0)=\alpha, \qquad \text{and} \qquad |\nabla \overline{u}| \leq \alpha \qquad \text{in} \quad \Omega_{\overline{u}}^{+}, $$
while $B_1\subset \Omega_{\overline u}^+$ remains tangent to $\partial\Omega_{\overline u}^+$.
Hence, we have $\Delta_p \overline{u} = 0$ in $B_1$ and $\overline{u} \geq 0$ in $B_1$, but it is not identically zero because $\partial_n \overline{u} (0)=\alpha>0$. Therefore $\overline{u}>0$ in $B_1$. 
Since $\overline{u}$ is $p$-harmonic, it is locally $C^{1,\eta}$ and we can choose a small $\delta>0$ such that $\partial_n \overline{u} > \frac{\alpha}{2}$ in $B_{\delta}$.
In particular, the function $v=\partial_n \overline{u}$ satisfies the uniform elliptic equation 
\begin{equation}
\label{Ext18}
\sum_{i,j=1}^n \partial_i(a_{ij}\partial_j v) =0, \qquad a_{ij} = |\nabla \overline{u}|^{p-2}\left( \delta_{ij} + (p-2) \frac{\partial_i \overline{u} \partial_j \overline{u}}{|\nabla \overline{u}|^2}\right),
\end{equation}
in $B_{\delta}$.
Hence by the strong maximum principle
$$ \partial_n \overline{u} \equiv \alpha, \qquad \text{in} \quad \mathcal{O}, $$
where $\mathcal{O}$ is the connected component on $\Omega_{\overline{u}}^{+}$ containing $B_1$. 
Therefore,
$$ \overline{u}(x)=\alpha x_n+\mathrm{const}, \qquad \text{in} \quad \mathcal{O}. $$
Moreover, $\overline{u}=0$ at the point $-e_n$, where the unit ball is tangent to $\partial \Omega_{\overline{u}}^{+}$. Hence,
\begin{equation*}
\overline{u}(x)=\alpha(x_n+1), \qquad \text{in} \quad  \{x_n>-1\}.
\end{equation*}
On the other hand, $\overline{u}$ is also a viscosity solution to \eqref{OVERDETERMINED}  and we must have 
$$\alpha \ge \lambda_+.$$
Now recall \eqref{Ext16}. Then, $|\nabla \overline{u}| \leq \beta $ in $\Omega_{\overline{u}}^{-}$ shows that the function
$$ H(x):=\alpha(x_n+1)^+-\beta(x_n+1)^-, $$
touches $\overline{u}$ from below on $ \{x_n=-1\}$. 
We distinguish here two cases:

\medskip
\noindent{\bf Case $(i)$:} $\{x_n=-1\} \cap \Gamma_{\mathrm{OP}}(\overline{u}) \ne \emptyset$. \\
From the definition of viscosity solution (see \cite[Lemma 3.1]{fotouhi-bayrami2023}), we obtain that 
\[
\alpha = |\nabla H^+| \le \lambda_+.
\]
So, we have $\alpha =\lambda_+$ in this case.

\medskip
\noindent{\bf Case $(ii)$:} $\{x_n=-1\} \subseteq \Gamma_{\mathrm{TP}}(\overline{u})$.\\
Again by the definition of viscosity solution, we must have
\[
 \alpha^p- \beta^p = |\nabla H^+|^p-|\nabla H^-|^p \le \lambda_+^p-\lambda_-^p.
\]

\medskip

Similarly, we can argue as above, starting with points $x_{\epsilon} \in \{x \in \mathbb{R}^n \, : \, u(x)<0\}$ where $\nabla u(x_{\epsilon})>\beta-\epsilon$, to get $\beta \ge \lambda_-$, as well as one of the following cases hold: 
$$\beta = \lambda_-  \qquad \text{or} \qquad \alpha^p-\beta^p \ge \lambda_+^p-\lambda_-^p.$$ 
It follows that
$$ \alpha^p-\beta^p=\lambda_+^p-\lambda_-^p. $$

In order to prove  the second part of the lemma, notice that if either $\alpha>\lambda_+$ or $\beta > \lambda_-$, we are in situation Case $(ii)$, and hence $\Omega^-_{\overline{u}}=\{x_n<-1\}$. 
Since $H(x)$ touches $\overline{u}(x)$ from below on $ \{x_n=-1\}$, the Hopf boundary lemma yields
$$ \overline{u}(x)=\alpha(x_n+1)^+-\beta(x_n+1)^-. $$

Finally, consider the case $\alpha=\lambda_+$ and $\beta=\lambda_-$, and assume that $0\in \Gamma^{\mathrm{int}}_{\mathrm{TP}}(u)$.
Let 
\[
\ell = \limsup_{\Omega_u^+\ni x \to 0}|\nabla u(x)| \le \alpha=\lambda_+,
\]
and choose a maximizing sequence $y_k\to0$ such that $|\nabla u(y_k)|\to\ell$ and $u(y_k)>0$.
Repeating the argument from the first part of the proof, we conclude that $ \ell \geq \lambda_+$ and $\frac{u(y_k+d_k x)}{d_k} \to \tilde u(x)$, up to a subsequence. 
Also, $\tilde u(x) = \lambda_+ (x_n+1)$ in $\{x_n>-1\}$.
Since $0\in \Gamma^{\mathrm{int}}_{\mathrm{TP}}(u)$ and $|B_r\cap\{u=0\}|=0$ for some $r>0$, so $\{x_n=-1\} \subseteq \Gamma_{\mathrm{TP}}(\tilde{u})$ and 
\[
 \tilde{u}(x)=\lambda_+(x_n+1)^+-\lambda_-(x_n+1)^-.
\]
\end{proof}

\begin{remark}
From now on, for the sake of convenience, we assume that 
$$\alpha=\beta=1.$$ 
This is not a restrictive assumption as one can replace $u$ by $\alpha^{-1} u^+ - \beta^{-1} u^-$. In this case, we have $\lambda_+ = \lambda_- \le 1$.
\end{remark}

\begin{lemma}
\label{Ext20}
Assume $|\nabla u| \leq 1$, $0 \in \Gamma_{\mathrm{TP}}(u)$ and $t\textbf{e}_n \in \Omega_u^+$ for all $t \in (0,1]$. If 
$$ \partial_nu(t\textbf{e}_n) \geq 1-\eta, \qquad \forall t \in (0,1], $$
for some $\eta>0$, then
$$ |u(x)- x_n| \leq \epsilon(\eta), \qquad \text{in} \quad B_2, $$
with $\epsilon(\eta) \to 0$ as $\eta \to 0$. 
\end{lemma}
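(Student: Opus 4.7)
The plan is to argue by compactness and contradiction. Suppose the conclusion fails: there exist $\epsilon_0 > 0$ together with sequences $\eta_k \to 0$ and $u_k$ of global viscosity solutions satisfying the hypotheses with $\eta=\eta_k$, but $\sup_{B_2}|u_k - x_n| \geq \epsilon_0$. The uniform Lipschitz bound $|\nabla u_k| \leq 1$ with $u_k(0)=0$ yields equicontinuity, so passing to a subsequence $u_k \to u_\infty$ locally uniformly, where $u_\infty$ is a Lipschitz global viscosity solution to \eqref{OVERDETERMINED} with $|\nabla u_\infty| \leq 1$. Using non-degeneracy and the Hausdorff convergence of the phases one checks $0 \in \Gamma_{\mathrm{TP}}(u_\infty)$ and $t\ee_n \in \Omega^+_{u_\infty}$ for $t \in (0,1]$. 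The goal then reduces to proving $u_\infty \equiv x_n$ on $B_2$, which contradicts $\sup_{B_2}|u_k - x_n| \geq \epsilon_0$.

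First I would handle the upper half. By uniform convergence $u_\infty(t\ee_n) = t$ on $[0,1]$, and since $u_k$ is $p$-harmonic in $\Omega^+_{u_k}$ with $|\nabla u_k|$ bounded and bounded below by $1-\eta_k$ on the axis, the interior $C^{1,\alpha}$ estimates for the $p$-Laplace equation (uniformly elliptic away from critical points) give $\partial_n u_k(t\ee_n) \to \partial_n u_\infty(t\ee_n)$, yielding $\partial_n u_\infty(t\ee_n) \geq 1$. Combined with $|\nabla u_\infty|\leq 1$ this forces $\nabla u_\infty(t\ee_n) = \ee_n$. Differentiating the $p$-Laplace equation as in \eqref{Ext18}, the function $v := \partial_n u_\infty$ satisfies a linear uniformly elliptic equation in a neighborhood of the axis and attains the value $1 = \sup v$ there; the strong maximum principle then yields $v \equiv 1$, hence $u_\infty = x_n$, in that neighborhood. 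An open–closed argument, invoking the analyticity of $p$-harmonic functions with non-vanishing gradient \cite{MR531272}, extends this identity to the connected component $\mathcal{O}\subset \Omega^+_{u_\infty}$ containing the axis; since $\partial\mathcal{O}\subset\{u_\infty=0\}=\{x_n=0\}$ on $\mathcal{O}$ and $\mathcal{O}$ is connected, a short geometric argument shows $\mathcal{O}=\{x_n>0\}$, so that $u_\infty \equiv x_n$ on $\{x_n\geq 0\}\cap B_2$.

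Next I would repeat the argument symmetrically for the lower half. Because the normalization $\alpha=\beta=1$ adopted in the remark following Lemma \ref{Ext17} forces $\lambda_+ = \lambda_-$, the free-boundary condition at any $(x',0) \in \Gamma_{\mathrm{TP}}(u_\infty)$ reduces to $|\nabla u^+_\infty|^p = |\nabla u^-_\infty|^p$; together with $|\nabla u^+_\infty(x',0)|=1$ from the upper-half conclusion this gives $|\nabla u^-_\infty(x',0)|=1$ (any one-phase boundary point would force $\lambda_+=1$ and the same value). A symmetric Hopf/strong-maximum-principle argument applied to $\partial_n u_\infty$ in $\Omega^-_{u_\infty}$ then gives $u_\infty = x_n$ on $\{x_n<0\}\cap B_2$, yielding $u_\infty\equiv x_n$ on $B_2$ and the desired contradiction.

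The hardest step will be the extension of $\partial_n u_\infty\equiv 1$ from a neighborhood of the axis to the full half-ball; it combines the strong maximum principle, analytic continuation for the (non-critical) $p$-harmonic function, and a geometric analysis ensuring no pocket of the opposite phase intrudes into the relevant half-ball. The lower-half argument is formally symmetric once $|\nabla u^-_\infty|=1$ is secured on the two-phase part, but some care is needed in the degenerate case $\lambda_+=\lambda_-=1$, where isolated one-phase portions of $\{x_n=0\}$ are a priori admissible and must be ruled out by the slope-matching identity.
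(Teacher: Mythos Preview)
Your proposal is correct and follows the same overall scheme as the paper: compactness/contradiction, extract a limit $u_\infty$, and show $u_\infty\equiv x_n$. The paper's execution is considerably shorter, however, because it leans directly on the argument of Lemma~\ref{Ext17}. Two points are worth noting.

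For the upper half, the paper does not pass through $C^{1,\alpha}$ convergence of gradients along the axis, analyticity, or an open--closed continuation. It simply integrates the hypothesis to get $u_k(\ee_n)\ge 1-\eta_k$, and since $|\nabla u_k|\le 1$ this forces $u_k>0$ in $B_{1-\eta_k}(\ee_n)$. In the limit $B_1(\ee_n)\subset\Omega^+_{\overline u}$ with $\partial_n\overline u$ attaining its maximum $1$ at an interior point; the strong maximum principle for the linearized equation~\eqref{Ext18} then gives $\partial_n\overline u\equiv 1$ on the whole connected component $\mathcal O$ at once, so $\overline u=x_n$ on $\{x_n>0\}$. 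Your route works, but the analyticity appeal is unnecessary.

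For the lower half, rather than matching slopes via the free-boundary condition and then rerunning the argument, the paper (again following Lemma~\ref{Ext17}) observes that $H(x)=x_n^+-x_n^-$ touches $\overline u$ from below on $\{x_n=0\}$ because $|\nabla\overline u|\le 1$, and then invokes the Hopf boundary lemma to conclude $\overline u\equiv x_n$. This sidesteps your concern about possible one-phase portions of $\{x_n=0\}$ and the degenerate case $\lambda_+=\lambda_-=1$; no separate case analysis is needed.
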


\begin{proof}
This follows immediately by a compactness argument, along the same lines of arguments as the proof of Lemma \ref{Ext17}. 
Assume by contradiction that there exists a sequence $\eta_k \to 0$, and a sequence of equi-Lipschitz solutions $u_k$ such that
\begin{equation}
\label{Ext21}
\partial_nu_k(t\textbf{e}_n) \geq 1-\eta_k, \qquad \forall t \in (0,1], \, t\textbf{e}_n \in \Omega_{u_k}^+,
\end{equation}
but
\begin{equation}
\label{Ext22}
|u_k(x)-x_n|>\delta, \qquad \text{at some point $x\in B_2$ for some fixed $\delta>0$}.
\end{equation}
Since $u_k(0)=0$ and by \eqref{Ext21}, we obtain
$$ u_k(\textbf{e}_n) \geq 1 - \eta_k. $$
Using the Lipschitz bound $|\nabla u_k| \leq 1$, it follows that
$$ u_k>0, \qquad \text{in} \quad B_{1-\eta_k}(\textbf{e}_n). $$
We can now argue as in the previous lemma and extract a subsequence that converges uniformly on $\overline{B_2}$ to $\overline{u}=  x_n^+- x_n^-=x_n$, which contradicts \eqref{Ext22}.
\end{proof}

We are now ready to provide the proof of Theorem \ref{Ext02}.

\begin{proof}[Proof of Theorem \normalfont{\ref{Ext02}}]
According to Lemma \ref{Ext17}, after a translation, a rotation, and a rescaling, we can assume that (recall also the assumption $\alpha=\beta=1$)
\begin{equation*}
0 \in \Gamma^{\mathrm{int}}_{\mathrm{TP}}(u), \qquad \text{and} \qquad |u- x_n|\leq \epsilon' \quad \text{in} \quad B_1,
\end{equation*}
where $\epsilon'>0$ will be chosen sufficiently small.  
Let $\epsilon_0$ be a sufficiently small universal constant so that the conclusions of Proposition \ref{Ext07}, Proposition \ref{Ext09}, and Corollary \ref{Ext13} hold for all $\epsilon\leq \epsilon_0$. If $\epsilon' \leq \epsilon_0$, then Corollary \ref{Ext13} implies that $ \Gamma^{\mathrm{int}}_{\mathrm{TP}}(u) \cap B_{\frac{1}{2}}$ is locally $C^{1,\eta}$ smooth and $u \in C^{1,\eta}\left( \overline{\Omega_u^+ \cap B_{\frac{1}{2}} }\right)$ . 

\medskip
{\bf Step 1. Propagation of the lower bound for $\partial_n u$.}
By denoting $h:=u-x_n \in C^{1,\eta}\left( \overline{ \Omega_u^+ \cap B_{\frac{1}{2}} } \right)$, we easily achieve the following uniformly elliptic equation 
\begin{equation}
\label{Ext23.5}
\sum_{i,j=1}^n  \theta_{ij}  \partial_{x_ix_j} h = 0, \qquad\text{in} \quad \{\nabla u \ne0\}, 
\end{equation}
with $C^{0,\eta}$ coefficients $\theta_{ij}=\delta_{ij} + (p-2)|\nabla u|^{-2} \partial_{x_i}u \partial_{x_j}u $. 
Let $B_r$, $r\le \frac12$, be the largest ball such that $\nabla u\ne0$ inside $\Omega_u^+ \cap B_r$; then, the flatness of the free boundary implies that $|u_r-x_n| \le C\epsilon'$ for a universal constant $C$. 
So, $h_r=u_r-x_n$ satisfies \eqref{Ext23.5} in $\Omega_{u_r}^+\cap B_1$, and the  $C^{1,\eta}$ estimate of $h_r$  yields that  
\[
|\partial_n u - 1| \le C\epsilon', \qquad \text{in} \quad \Omega_u^+ \cap B_r,
\]
for a universal constant $C$. 
If $\epsilon_0$ is sufficiently small, we can conclude that   $\nabla u \ne 0$ in $\Omega_u^+ \cap B_{\frac{1}{2}}$.
Using again $C^{1,\eta}$ estimate of $h$ in \eqref{Ext23.5}, we  deduce that
\begin{equation}
\label{Ext24}
\partial_n u(t\textbf{e}_n) \geq 1 - C \epsilon', \qquad \forall t \in \left(0,\frac{1}{2} \right],
\end{equation}
and consequently
\begin{equation}
\label{Ext25}
\partial_n u^+(0) \geq 1-C\epsilon'.
\end{equation}

Now let $\epsilon(\eta)$ be as defined in Lemma \ref{Ext20}, and choose a  universal constant $\overline{\eta}$ small enough such that $ \epsilon(\overline{\eta}) \leq \epsilon_0$. 
We claim that the following inequality holds
\begin{equation}
\label{ExtIMC}
\partial_n u(t\textbf{e}_n) \geq 1- \overline{\eta}, \qquad \text{for all} \quad t>0,
\end{equation} 
provided that $\epsilon'$ is chosen sufficiently small.

\medskip
{\bf Step 2. Consequences of  the claim \eqref{ExtIMC}.}
Assume for the moment that \eqref{ExtIMC} holds, Lemma \ref{Ext20} gives 
$$ |u-x_n|\leq \epsilon(\overline{\eta}) R \leq \epsilon_0 R, \qquad \text{in} \quad B_{R}, $$
for every $R>0$. 
This, along with the improvement of flatness \cite[Theorem 4.1]{fotouhi-bayrami2023}, implies that 
\[
|u-U_{{a}^{+},\overline{\nu}}| \leq \epsilon_0 \rho^{1+\gamma}R, \qquad \text{in} \quad B_{\rho R},
\]
for some $a^+=a^+(R)$, $\overline{\nu}=\overline{\nu}(R)$, and universal constants $\rho, \gamma>0$. Then, for all $R$
\[
\dist(u, \mathcal{U}_{\lambda_+,\lambda_-}) \le \epsilon_0 \rho^{\gamma}R,
\]
where the distance is measured in $L^\infty(B_R)$ and $\mathcal{U}_{\lambda_+,\lambda_-}$ is the family of all two-plane solutions. 
Repeat this argument to find out $\dist(u, \mathcal{U}_{\lambda_+,\lambda_-})\to 0$, for a fixed $R$. 
So, $u$ must be a two-plane solution which concludes the proof of Theorem \ref{Ext02}.

\medskip
{\bf Step 3. Proof of the claim \eqref{ExtIMC}.}
Let $\epsilon'':=C\epsilon'$, where $C$ is the constant appearing in \eqref{Ext24}. 
We will eventually choose $\epsilon''$ (and therefore $\epsilon'$) later, so that $\epsilon'' \ll \overline{\eta}$.
Let $\overline{t}$ be the first value of $t$ for which \eqref{ExtIMC} fails, and assume without loss of generality after a rescaling that $\overline{t}=1$. In other words,  
\begin{equation}
\label{Ext26}
\partial_nu(\textbf{e}_n)=1-\overline{\eta}, \qquad \text{and} \qquad \partial_nu(t\textbf{e}_n) \geq 1-\overline{\eta}, \quad \forall t \in [0,1]. 
\end{equation} 
Notice that after rescaling, \eqref{Ext25} is still satisfied, i.e.
$$ \partial_n u^+(0)\ge 1-\epsilon''. $$
Applying Lemma \ref{Ext20} together with \eqref{Ext26}, we conclude that
\begin{equation*}
|u- x_n|\leq \epsilon(\overline{\eta}) \leq \epsilon_0, \qquad \text{in} \quad B_2. 
\end{equation*}
Hence Corollary \ref{Ext13} applies and yields $u \in C^{1,\eta}\left( \overline{\Omega_u^+ \cap B_1 }\right)$ and that $ \Gamma^{\mathrm{int}}_{\mathrm{TP}}(u) \cap B_1$ is a $C^{1,\eta}$ graph 
with small norm. 

Since $1-\partial_n u$ satisfies the uniform elliptic equation \eqref{Ext18}, we are able to  apply the Harncak inequality for $1-\partial_n u \geq 0 $ in $\{x_n > \epsilon_0 \}$, and noting that $(1-\partial_n u)(\textbf{e}_n)=\overline{\eta}$, gives that
$$ 1-\partial_n u \geq c \overline{\eta}, \qquad \text{in} \quad B_{\frac{1}{4}} \left( \frac{\textbf{e}_n}{2}  \right) \subset \Omega_u^+ \cap B_1. $$
In $ \left(\Omega_u^+ \cap B_1 \right) \setminus B_{\frac{1}{4}} \left( \frac{\textbf{e}_n}{2}  \right)$, we compare $1-\partial_n u$ with a suitable barrier function $w$, which is a solution to \eqref{Ext18} with the following boundary values
\[\left\{\begin{array}{ll}
w= 0,  \quad & \text{on} \quad \partial \Omega_u^+ \cap B_1,\\
w= c \overline{\eta},  \quad & \text{on} \quad \partial B_{\frac{1}{4}} \left( \frac{\textbf{e}_n}{2}  \right),\\
w= 0, \quad & \text{on} \quad \partial B_1 \cap \Omega_u^+.
\end{array}\right.\] 
Using the Hopf lemma in $C^{1,\eta}$ domains, together with $C^{1,\eta}$ estimates up to the boundary, we conclude that $w$ grows linearly away from $\partial \Omega_u^+ \cap B_1$; hence
\begin{equation*}
\partial_n u(t\textbf{e}_n) \leq 1 - c \overline{\eta} t, \qquad \forall t \in \left[0,\frac{1}{4} \right],
\end{equation*}
with $c$ independent of $u$. 
Integrating along the $\textbf{e}_n$-direction, and using that $u(0)=0$ and $c,\overline{\eta}$ are universal, we find 
\begin{equation}
\label{Ext29}
u(t\textbf{e}_n) \leq t - c_1 t^2, \qquad \forall t \in \left[0,\frac{1}{4} \right],
\end{equation}
for some small universal constant $c_1>0$.

\medskip
{\bf Step 4. Iteration of the Nonlinear dichotomy.}
We now apply Proposition \ref{Ext07} and conclude that either alternative $(i)$ or $(ii)$ is satisfied. However, if $\epsilon''$ is small enough, then alternative $(i)$ cannot hold;  otherwise, by \eqref{Ext25}
$$ 1-c_0\epsilon_0 \geq |\nabla u^+(0)| \geq \partial_n u^+(0) \geq 1-\epsilon'', $$
which leads to a contradiction. 
Thus $(ii)$ must hold and we can rescale and iterate one more time; see Remark \ref{rmk-iterate}.
Similarly, after applying the conclusion of  Proposition \ref{Ext07} for $m$-times, we obtain that if $\epsilon''$ is sufficiently small depending on $m$, $\epsilon_0$, $c_0$ and  $r_0$, then only alternative $(ii)$ can hold for the $m$ iterations and conclude that
\begin{equation}
\label{ExtExt}
|u-P_{M,\nu}| \leq \epsilon_0 r^{2+\alpha_0}, \qquad \text{in} \quad B_r, \quad r=r_0^m,
\end{equation}
with
\begin{equation}
\label{Ext30}
\left| \frac{\nabla u^+(0)}{|\nabla u^+(0)|}-\nu \right| \leq C \epsilon_0 r^{1+\alpha_0}, \qquad \text{and} \qquad M\nu=0.
\end{equation}
Here we have also used Corollary \ref{Ext13}. 
Now, notice that
$$ 1-\epsilon'' \leq\partial_n u^+(0) =\textbf{e}_n \cdot \nabla u^+(0) \leq |\nabla u^+(0)| \leq 1, $$
which implies
$$ \left| \frac{\nabla u^+(0)}{|\nabla u^+(0)|}-\textbf{e}_n \right| \leq (2\epsilon'')^{\frac{1}{2}}. $$
Thus, if $\epsilon''$ is small enough depending on $r$, the inequality above together with \eqref{Ext30} gives that
$$ |\nu-\textbf{e}_n| \leq 2 C \epsilon_0 r^{1+\alpha_0}. $$
Since $M\nu=0$ and $\|M\|\leq 1$,
$$ \left| P_{M,\nu} \left( \frac{r}{2} \textbf{e}_n \right) - \frac{r}{2} \right| = \left| \frac{r}{2} \textbf{e}_n \cdot (\nu-\textbf{e}_n) + \frac{r^2}{8} (\textbf{e}_n-\nu)^TM(\textbf{e}_n-\nu) \right| \leq C' r^{2+\alpha_0}, $$
which combined with \eqref{ExtExt} gives that
$$ \left| u \left( \frac{r}{2} \textbf{e}_n \right) - \frac{r}{2} \right| \leq 2C' r^{2+\alpha_0}. $$
This contradicts \eqref{Ext29} for $t=\frac{r}{2}$, as long as $r$ is small enough universal, i.e. $m$ is large enough, and $\epsilon''$ is small enough.
This completes the proof of the theorem.
\end{proof}

The main consequence of Theorem \ref{Ext02} is the regularity of  the free boundary at regular points, namely Theorem \ref{T1}.

\begin{proof}[Proof of Theorem \normalfont{\ref{T1}}]
Consider a regular two-phase point of the free boundary. 
Then, the Lipschitz regularity of the solution, \cite[Theorem 1.2]{fotouhi-bayrami2023}, and the nondegeneracy property, \cite[Proposition 2.2]{fotouhi-bayrami2023}, proves that  any blow-ups at this point  will be a Lipschitz global solution with $0 \in \Gamma^{\mathrm{int}}_{\mathrm{TP}}(u)$. 
According to Theorem \ref{Ext02},  such a global solution  must be a two-plane solution, and we get that
 the (initial) flatness assumption in \cite[Theorem 1.1]{fotouhi-bayrami2023} is satisfied. 
 Therefore, by \cite[Theorem 1.1]{fotouhi-bayrami2023}, the boundaries $\partial \Omega_u^{\pm}$ are $C^{1,\eta}$ smooth in a neighborhood of the regular two-phase point, for any  $\eta \in (0,\frac{1}{3})$.
\end{proof}


\appendix
\section*{Appendix}
\setcounter{section}{1}
\setcounter{theorem}{0}

In the following lemma, for the convenience of the readers, we prove a Harnack inequality for an auxiliary problem of the $p$-Laplacian type.

\begin{lemma}[Harnack inequality]
\label{auxiliaryLemma}
Let $1<p<\infty$, and $\Omega$ be a smooth bounded domain in $\mathbb{R}^n$. Suppose $x_0 \in \Omega$, and $0<R \leq 1$ such that $\overline{B_{4R}(x_0)} \subset \Omega$. 
Let $h \in W^{1,p}(\Omega) \cap L^{\infty}(\Omega)$ be a non-negative solution to 
\begin{equation}
\label{AppE-1}
\mathrm{div} \left( \left| \nabla h(x) + Fx + E \right|^{p-2} \left( \nabla h(x) + Fx + E \right) \right)=0, \qquad \text{in} \quad \Omega,
\end{equation}
where $F$ is an $ n \times n$ symmetric matrix and $E \in \mathbb{R}^{n}$. 
Then, there exists $C$ depending only on $n$, $p$, $F$, $E$, and $\|h\|_{L^{\infty}(B_{4R}(x_0))}$ such that 
\begin{equation*}
\sup_{B_R(x_0)} h \leq C \left( \inf_{B_R(x_0)} h + R\right). 
\end{equation*}
\end{lemma}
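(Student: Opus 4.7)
The strategy is to absorb the affine perturbation $Fx+E$ into a quadratic potential and thereby reduce the statement to the classical Harnack inequality for non-negative $p$-harmonic functions.

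\textbf{Step 1: A change of dependent variable.} Since $F$ is symmetric, the smooth function $\phi(x):=\tfrac{1}{2}x^{T}Fx+E\cdot x$ satisfies $\nabla\phi(x)=Fx+E$. Plugging this into \eqref{AppE-1}, the equation for $h$ rewrites as
$$\mathrm{div}\bigl(|\nabla(h+\phi)|^{p-2}\nabla(h+\phi)\bigr)=0 \quad \text{in } \Omega,$$
that is, $v:=h+\phi$ is a weak $p$-harmonic function on $\Omega$. This step amounts to a direct verification against smooth test functions.

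\textbf{Step 2: Reduction to the non-negative case.} To invoke the classical Harnack inequality, I normalize $v$ so as to be non-negative on $B_{4R}(x_0)$. Let $m:=\min_{\overline{B_{4R}(x_0)}}\phi$, which is finite by continuity on a compact set. The function
$$w:=v-m=h+(\phi-m)$$
is again weakly $p$-harmonic (adding a constant does not change $\nabla w$), and on $B_{4R}(x_0)$ one has $w\ge h\ge 0$ since $\phi-m\ge 0$ there.

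\textbf{Step 3: Classical Harnack and reversion to $h$.} The standard Harnack inequality for non-negative $p$-harmonic functions (Serrin/Trudinger/Tolksdorf) applied to $w$ on $B_{4R}(x_0)$ yields
$$\sup_{B_R(x_0)} w \leq C_H \inf_{B_R(x_0)} w,$$
with $C_H$ depending only on $n$ and $p$. Using $h\le w$ pointwise and $w\le h+(\phi-m)\le h+\mathrm{osc}_{B_{4R}(x_0)}(\phi)$, I obtain
$$\sup_{B_R(x_0)} h \;\leq\; \sup_{B_R(x_0)} w \;\leq\; C_H\inf_{B_R(x_0)} w \;\leq\; C_H\inf_{B_R(x_0)} h + C_H\,\mathrm{osc}_{B_{4R}(x_0)}(\phi).$$
Since $\phi$ is smooth, $\mathrm{osc}_{B_{4R}(x_0)}(\phi)\le 8R\,\|\nabla\phi\|_{L^{\infty}(B_{4R}(x_0))}\le 8R\bigl(\|F\|(|x_0|+4R)+|E|\bigr)$, which, together with $R\le 1$, is bounded by a constant multiple of $R$ depending only on $n,p,F,E$ (and the location of $x_0$, which is fixed within the ambient bounded $\Omega$). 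The desired conclusion follows.

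\textbf{Main obstacle.} There is no substantive obstacle: the whole proof is an observation plus an invocation of the classical $p$-harmonic Harnack inequality. The only care required is the bookkeeping of constants, in particular ensuring that the oscillation of $\phi$ on $B_{4R}(x_0)$ contributes only the additive term $CR$ on the right-hand side (rather than an $R$-independent constant), which it does because $\phi-m$ vanishes at a minimum point inside $B_{4R}(x_0)$ and $\nabla\phi$ is uniformly bounded on this ball.
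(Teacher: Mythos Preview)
Your proof is correct and, in fact, more elegant than the paper's. The key observation---that the symmetry of $F$ makes $Fx+E$ an exact gradient, so that $h+\phi$ is genuinely $p$-harmonic---short-circuits the entire argument. The paper instead treats the equation as a general quasilinear divergence-form equation $\mathrm{div}\,A(x,\nabla h)=0$ with $A(x,\xi)=|\xi+Fx+E|^{p-2}(\xi+Fx+E)$, verifies the structural growth and coercivity conditions $|A(x,\xi)|\le C_1(|\xi|^{p-1}+c)$ and $\xi\cdot A(x,\xi)\ge C_4|\xi|^p-C_5$ by hand (splitting into cases according to the size of $|\xi+Fx+E|$), and then invokes a black-box Harnack inequality for such operators. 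Your route is more direct, avoids the case analysis, and as a bonus yields a constant that does \emph{not} depend on $\|h\|_{L^\infty(B_{4R}(x_0))}$, which the paper lists among its dependencies. The paper's approach, on the other hand, would still work if $F$ were not symmetric (so that $Fx+E$ is not a gradient), whereas yours relies essentially on this hypothesis---but since the lemma assumes $F$ symmetric, this is not a limitation here.
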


\begin{proof}
Let $A: \Omega \times \mathbb{R}^n \to \mathbb{R}^n$ be defined as
$$ A(x, \xi)=\left| \xi + Fx+ E \right|^{p-2} \left( \xi+ Fx+ E \right). $$
Then, \eqref{AppE-1} can be written in the form
$$
\mathrm{div\,} A(x, \nabla u) = 0, \qquad \text{in} \quad \Omega.
$$
We verify the structural conditions needed for the Harnack inequality in \cite[Theorem 1.1]{MR3361842}.

First, for every $\xi \in \mathbb{R}^n$,
$$ \left| A(x, \xi) \right| = \left| \xi + Fx+ E \right|^{p-1} \leq C_1 \left( \left| \xi \right|^{p-1} + \max_{x \in \Omega} \left| Fx+E \right|^{p-1} \right) , $$
where $C_1$ depends only on $p$. 
Next, we compute
\begin{equation}
\label{AppE-3}
\begin{aligned}
\xi^T A(x,\xi) & = \left| \xi+ Fx+ E \right|^{p-2} \xi^T \left( \xi+ Fx+ E \right) \\
&= \left| \xi+ Fx+ E \right|^{p} - \left| \xi+ Fx+ E \right|^{p-2} \left(Fx+E \right)^T \left( \xi+ Fx+ E \right) \\
& \geq \left| \xi+ Fx+E \right|^{p} - \left( \max_{x \in \Omega} \left| Fx+E \right| \right) \left| \xi+ Fx+E \right|^{p-1}.
\end{aligned}
\end{equation}
Now, if $|\xi+ Fx+ E| \leq 2 \max_{x \in \Omega} \left| Fx+E \right| $, we get from \eqref{AppE-3}
\begin{equation}
\label{AppE-4}
\begin{aligned}
\xi^T A(x,\xi) & \geq \left| \xi+  Fx+ E \right|^{p} - 2^{p-1} \left(  \max_{x \in \Omega} \left| Fx+E \right|  \right)^p \\
& \geq C_2 \left| \xi \right|^{p} - C_2 \max_{x \in \Omega} \left| Fx+E \right|^{p} - 2^{p-1} \max_{x \in \Omega} \left| Fx+E \right|^{p},
\end{aligned}
\end{equation}
where $C_2$ depends only on $p$. 

On the other hand, if $|\xi+ Fx+ E| > 2 \max_{x \in \Omega} \left| Fx+E \right|$, we obtain from \eqref{AppE-3}
\begin{equation}
\label{AppE-5}
\begin{aligned}
\xi^T A(x,\xi) & \geq \left| \xi+ Fx+ E \right|^{p} - \left( \max_{x \in \Omega} \left| Fx+E \right| \right) \left| \xi+  Fx+E \right|^{p-1} \\
& = \left| \xi+ Fx+E \right|^{p} \left( 1- \left( \max_{x \in \Omega} \left| Fx+E \right| \right) |\xi+ Fx+ E|^{-1} \right) \\
& \geq \frac{1}{2} \left| \xi+ Fx+ E \right|^{p} \geq C_3 \left| \xi \right|^{p} - C_3 \max_{x \in \Omega}  \left| Fx+E \right|^{p},
\end{aligned}
\end{equation}
where $C_3$ depends only on $p$. Then, from \eqref{AppE-4} and \eqref{AppE-5}, we deduce
$$ \xi^T A(x,\xi) \geq C_4 |\xi|^p - C_5, $$
where $C_4$ depends only on $p$, and $C_5$ depends on $F$, $E$, and $p$. 

Therefore, the operator satisfies the standard structural assumptions of \cite[Theorem 1.1]{MR3361842}, and the desired Harnack inequality follows.
\end{proof}

In the following lemma, we present a Liouville type result for the global solutions of the operator $\mathcal{L}_p$, introduced in \eqref{Linearizedop}.

\begin{lemma}[Liouville-type result]
\label{C-G-S}
Let $U$ be a global solution to
\begin{equation}
\label{Di-18}
\begin{cases}
\mathcal{L}_p (U) = 0, & \qquad \text{in} \quad \mathbb{R}^n \cap \{x_n \neq 0\}, \\
\partial_n^+ U=g \partial_n^- U, & \qquad \text{on} \quad \{x_n =0\},
\end{cases}
\end{equation}
with $0<b_0 \leq g \leq b_1$ and
\begin{equation}
\label{Di-19}
\partial_n U \leq0, \qquad |U(x)|\leq C|x|^2,
\end{equation}
for some constant $C$. Then, $U=Q(x')$ where $Q(x')$ is a pure quadratic polynomial in the $x'$-direction such that $\mathcal{L}_p (Q) =\Delta Q = 0$.
\end{lemma}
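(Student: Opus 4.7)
The plan is to combine Schauder-type regularity for the transmission problem with a Liouville theorem for the bimaterial problem, and then use the monotonicity $\partial_n U\le 0$ together with the quadratic growth. A preliminary simplification: under the change of variables $y=(x',x_n/\sqrt{p-1})$, the operator $\mathcal L_p$ becomes $\Delta_y$, while $\{x_n=0\}$ and the transmission condition $\partial_n^+U=g\,\partial_n^-U$ are unchanged (the ratio $g$ is preserved). Thus I may assume $U$ is harmonic on each side of $\{y_n=0\}$, continuous across the interface, with the same flux jump.

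First I observe that the rescalings $U_R(x):=U(Rx)/R^2$ solve the same transmission problem and satisfy $|U_R|\le C$ in $B_1$. Because the interface is flat and $g$ is constant, standard $C^{2,\alpha}$ Schauder estimates for the bimaterial problem give $\|D^2U_R\|_{L^\infty(B_{1/2})}\le C'$ independently of $R$. Hence each tangential second derivative $V:=\partial_{ij}U$, with $i,j<n$, is globally bounded on $\bR^n$. Differentiating the interior equation and the transmission condition in tangential directions (which is legitimate because $g$ is constant and the interface flat), $V$ is itself a bounded global solution of the same transmission problem.

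The main step is a Liouville theorem: any bounded global solution $V$ of the transmission problem is constant. I would set $W^\pm(y):=V(y',\pm y_n)$ for $y_n\ge 0$, so both are bounded harmonic on the upper half-space; the interface conditions translate to $W^+=W^-$ and $\partial_nW^++g\,\partial_nW^-=0$ on $\{y_n=0\}$. Then $\Phi:=W^++gW^-$ is bounded harmonic in $\{y_n>0\}$ with zero Neumann data, so its even reflection is bounded harmonic on $\bR^n$ and therefore constant. The difference $\Psi:=W^+-W^-$ is bounded harmonic with zero Dirichlet data, so its odd reflection is bounded harmonic on $\bR^n$ and therefore identically zero. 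Combining forces $W^+$ to be constant, hence $V$ is constant. This Liouville step is the main obstacle; the rest is bookkeeping.

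With $\partial_{ij}U=c_{ij}$ constant for every $i,j<n$, integrating twice in $x'$ and using tangential continuity across $\{x_n=0\}$ produces
\begin{equation*}
U(x',x_n)=\tfrac12\,x'^T C x'+H(x_n)\cdot x'+\phi(x_n),
\end{equation*}
with $C=(c_{ij})$ and $H,\phi$ continuous on $\bR$ and smooth on each open side. The monotonicity $\partial_nU=H'(x_n)\cdot x'+\phi'(x_n)\le 0$ for all $x'$ forces $H'\equiv 0$ on each side, so by tangential continuity $H$ is a single constant vector. The equation $\mathcal L_pU=0$ yields $\mathrm{tr}(C)+(p-1)\phi''=0$ on each side, making $\phi'$ affine there; the monotonicity $\phi'\le 0$ on all of $\bR$ together with $x_n\to\pm\infty$ forces $\mathrm{tr}(C)=0$. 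Finally, the growth $|U|\le C|x|^2$ evaluated at $(0,x_n)$ as $x_n\to 0$ kills the remaining $\phi$, and evaluated at $(tx',0)$ divided by $t$ as $t\to 0$ kills the linear term $H\cdot x'$. Therefore $U(x',x_n)=Q(x'):=\tfrac12\,x'^T C x'$ with $\Delta Q=\mathrm{tr}(C)=0$, as required.
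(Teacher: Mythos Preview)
Your argument is correct and takes a genuinely different route from the paper. The paper proceeds by blow-down: it passes to a limit $\overline U$ of quadratic rescalings, shows via a Harnack/barrier argument that every tangential second derivative $\partial_{\tau\tau}\overline U$ attains its supremum and is therefore constant, deduces that $\overline U$ is a quadratic polynomial on each half-space, matches the two polynomials using the monotonicity and transmission condition, and finally uses the Hopf lemma (twice) to conclude $U\equiv Q$. You instead eliminate both the blow-down and the Hopf step: the anisotropic scaling $x_n\mapsto x_n/\sqrt{p-1}$ reduces $\mathcal L_p$ to the Laplacian, uniform $C^{2,\alpha}$ estimates for the flat-interface transmission problem bound the tangential Hessian of $U$ globally, and your reflection Liouville argument (combining $W^++gW^-$ with zero Neumann data and $W^+-W^-$ with zero Dirichlet data, then invoking the classical Liouville theorem) forces each $\partial_{ij}U$, $i,j<n$, to be constant. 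Your method is more elementary and self-contained, resting only on classical harmonic-function facts; the paper's proof stays within the barrier/maximum-principle toolkit used throughout the article and would be more robust under perturbations (variable~$g$, nonflat interface) where reflection is unavailable. One minor remark on your final step: the bound $|U(0,x_n)|\le Cx_n^2$ first gives $\phi(0)=0$ and only then, using that $\phi$ is already piecewise linear, yields $|c_\pm|\le C|x_n|\to0$; you have compressed these two conclusions into a single clause.
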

\begin{proof}
   This follows directly from \cite[Lemma 4.7]{MR3998636}.
\end{proof}

\medskip

\paragraph{\bf{Acknowledgements}}
Masoud Bayrami and Morteza Fotouhi was supported by Iran National Science Foundation (INSF) under project No. 4031333.

\section*{Declarations}

\noindent {\bf  Data availability statement:} All data needed are contained in the manuscript.

\medskip
\noindent {\bf  Funding and/or Conflicts of interests/Competing interests:} The authors declare that there are no financial, competing or conflict of interests.

\addcontentsline{toc}{section}{\numberline{}References}

\bibliographystyle{acm}
\bibliography{mybibfile}

\end{document}